\newcommand{\ds}{\displaystyle}
\newcommand{\be}{\begin{equation}}
\newcommand{\ee}{\end{equation}}
\newcommand{\ba}{\begin{align}}
\newcommand{\ea}{\end{align}}
\newtheorem{theorem}{Theorem}[section]
\newtheorem{lemma}{Lemma}[section]
\def\beq{\begin{equation}}\def\enq{\end{equation}}
\keywords{inhomogeneous Diophantine approximation}
\subjclass[2010]{Primary: 11J20; Secondary: 11J06, 11J70}
\date{\today}
\title{Bounding the Largest Inhomogeneous Approximation Constant}
\author[B. Paudel]{Bishnu Paudel}
\address{ Department of Mathematics\\
         Kansas State University\\
         Manhattan, KS 66506, USA}
\email{bpaudel@ksu.edu, pinner@math.ksu.edu}
\author[C. Pinner]{Chris Pinner}
\begin{document}

\begin{abstract} For a given irrational number $\alpha$ and a real number $\gamma$ in $(0,1)$ one defines the two-sided inhomogeneous approximation constant 
\begin{equation*}
    M(\alpha,\gamma):=\liminf_{|n|\rightarrow\infty}|n| ||n\alpha-\gamma||,
\end{equation*}
and the case of worst inhomogeneous approximation for $\alpha$
\begin{equation*}
    \rho(\alpha):=\sup_{\gamma\notin\mathbb{Z}+\alpha\mathbb{Z}}M(\alpha,\gamma).
\end{equation*}
We are interested in  lower bounds on $\rho(\alpha)$ in terms of $R:=\liminf_{i\rightarrow\infty}a_i,$ where the $a_i$ are the partial quotients in the negative (i.e.\ the `round-up') continued fraction expansion  of $\alpha$.
We obtain bounds for any  $R\geq 3$ which are best possible when $R$ is even (and asymptotically precise when $R$ is odd). In particular when $R\geq 3$
$$ \rho(\alpha)\geq \cfrac{1}{6\sqrt{3}+8}=\cfrac{1}{18.3923\dots}, $$
and when $R\geq 4$, optimally,
$$ \rho(\alpha) \geq \cfrac{1}{4\sqrt{3}+2}=\cfrac{1}{8.9282\ldots}. $$

\end{abstract}

\maketitle

\section{Introduction}

For an irrational number $\alpha$ and a real number $\gamma$, we define the two-sided inhomogeneous approximation constant by 

\begin{equation*}
    M(\alpha,\gamma):=\liminf_{|n|\rightarrow\infty}|n|||n\alpha-\gamma||,
\end{equation*}

\noindent where $||x||$ denotes the distance from $x$ to the nearest integer. Plainly this reduces to the classical  homogeneous  problem $\gamma=0$ if $\gamma=m+l\alpha$ for some $m,l\in\mathbb{Z}$. The homogeneous problem is well understood,  with $M(\alpha,0)$ readily determined from the continued fraction expansion of $\alpha=[a_0;a_1,a_2,\ldots],$
$$ M(\alpha,0) =\frac{1}{\limsup_{i\rightarrow \infty} a_i+[0;a_{i+1},a_{i+2},\ldots ]+[0;a_{i-1},a_{i-2},\ldots]}\leq \frac{1}{\sqrt{5}}, $$
leading naturally to bounds in terms of the largest partial quotients 
$$\frac{1}{\sqrt{r^2+4r}}\leq  M(\alpha,0)\leq \frac{1}{\sqrt{r^2+4}}, \quad r:= \limsup_{i\rightarrow \infty} a_i,$$
with equality for $\alpha=[0;\overline{1,r}]=\frac{1}{2}(\sqrt{r^2+4r}-r)$ and $\alpha=[0;\overline{r}]=\frac{1}{2}(\sqrt{r^2+4}-r)$.

For any $\alpha$, we define the worst inhomogeneous approximation 

\begin{equation*}
    \rho(\alpha):= \sup_{\gamma\notin\mathbb{Z}+\alpha\mathbb{Z}}M(\alpha,\gamma).
\end{equation*}
By contrast with the homogeneous case, the inhomogeneous constant $\rho(\alpha)$  will be affected by the smallest partial quotients
\be \label{defR} R:=\liminf_{i\rightarrow \infty} a_i, \ee
\noindent From a well-known theorem of Minkowski we have $$\rho(\alpha)\leq\frac{1}{4},$$ 
see, for example, \cite[Chap. III]{Cassels} or \cite[IV.9]{Rockett},  Grace \cite{grace} giving examples 
with $R=\infty$ and $\rho(\alpha)=\frac{1}{4}$. We are interested here in the lower bound for $\rho(\alpha)$. Absolute bounds 
\be \label{lowerC}
    \rho(\alpha)\geq C
\ee
have some history. Davenport \cite{Davenport} obtained \eqref{lowerC} with $C=\frac{1}{128}$,  Ennola \cite{Ennola} 
 $$C=\frac{1}{16+6\sqrt{6}}=\frac{1}{30.69...},$$
and in \cite{Arxiv} the absolute lower bound was improved to 
\be \label{bestC} C=\frac{(\sqrt{10}-3)(7-\sqrt{13})}{(31-2\sqrt{10}-3\sqrt{13})}=\frac{1}{25.1592...}.\ee
See Rockett and Sz\"{u}sz \cite{Rockett} for a simpler proof with $C=\frac{1}{32}.$
The smallest known value of $\rho(\alpha)$, and hence an upper bound on the optimal absolute lower bound $C$,  is still  an example of Pitman \cite{Pitman} 
\begin{equation*}
    \rho\left(\frac{\sqrt{3122285}-1097}{1094}\right)=\frac{547}{4\sqrt{3122285}}=\frac{1}{12.9213...}.
\end{equation*}  
More generally, \cite{Arxiv} obtains bounds of the form $\rho (\alpha)\geq C^*(R)$, where the $a_i$ in \eqref{defR} are
 the partial quotients in the nearest integer continued fraction of $\alpha$
(giving \eqref{bestC} when $R=2$ and an improvement when $R\geq 3$). The bound comes by constructing a $\gamma^*$ with $M(\alpha,\gamma^*)\geq C^*(R)$.
The values for small $R$ are given in \eqref{table}  below and the asymptotic behavior in \eqref{ArxivAsym}.
The goal here is to improve these  $R\geq 3$ bounds when the $a_i$ in \eqref{defR} are the partial quotients in the negative continued fraction expansion of $\alpha$ rather than the nearest integer expansion.

\section{Preliminaries}

Different algorithms have been used for computing $M(\alpha,\gamma)$, see Komatsu \cite{Takao}. In this paper we will follow the approach of  \cite{Pinner}, which showed how $M(\alpha,\gamma)$ can be expressed in terms of the negative continued fraction expansion of $\alpha$ and a corresponding $\alpha$--expansion of  $\gamma \not\in \mathbb{Z}+\alpha\mathbb{Z}$. We start by recalling some notations and results from \cite{Pinner}. Since $||m+x||=||x||$ for any integer $m$, we may  assume that $\alpha,\gamma\in(0,1)$. For an $\alpha\in(0,1)$ we define the negative continued fraction expansion 
\begin{equation}\label{ContExpan}
    \alpha=\frac{1}{a_1-\cfrac{1}{a_2-\cfrac{1}{a_3-\cdots}}}=:[0; a_1, a_2, a_3,\cdots]^{-},
\end{equation}
where the integers $a_i\geq2$ are generated by the algorithm 
\begin{equation*}
    \alpha_0:=\{\alpha\}=\alpha, \,\ \,\ a_{n+1}:=\left\lceil{\frac{1}{\alpha_n}}\right\rceil, \,\ \,\ \alpha_{n+1}:=\left\lceil{\frac{1}{\alpha_n}}\right\rceil-\frac{1}{\alpha_n},
\end{equation*}
with the corresponding convergents $\cfrac{p_n}{q_n}:=[0;a_1, a_2,\ldots, a_n]^-$ given by 
\begin{align*}
    p_{n+1}&:=a_{n+1}p_n-p_{n-1}, \,\ p_0=0, \,\ p_{-1}=-1,\\
    q_{n+1}&:=a_{n+1}q_n-q_{n-1}, \,\,\ q_0=1, \,\,\ q_{-1}=0.
\end{align*}
We define
\begin{equation*}
    \alpha_n:=[0; a_{n+1}, a_{n+2}, ...]^-, \,\ \,\ \bar\alpha_n:=[0; a_{n}, a_{n-1}, ..., a_1]^-,\,\ \,\ D_n:=q_n\alpha-p_n, 
\end{equation*}
so that $$D_n=\alpha_0\alpha_1\cdots\alpha_n=a_nD_{n-1}-D_{n-2},\,\ \,\ q_n=(\bar\alpha_1\bar\alpha_2\cdots\bar\alpha_n)^{-1}.$$
We observe that 
\begin{align}\label{1}
    (a_1-1)D_0+\sum_{i=2}^{\infty}(a_i-2)D_{i-1}=1,\,\ \,\ p_{n+1}q_n-p_n q_{n+1}=1.
\end{align}

For any real number $\gamma\in(0,1)$, we generate the integers $b_i$ by the algorithm 
\begin{equation*}
    \gamma_0:=\{\gamma\}=\gamma, \,\ \,\ b_{i+1}:=\left\lfloor\frac{\gamma_i}{\alpha_i}\right\rfloor, \,\ \,\ \gamma_{i+1}:=\left\{\frac{\gamma_i}{\alpha_i}\right\},
\end{equation*}
so that 
\begin{equation*}
    \gamma=\sum_{i=1}^{n}b_iD_{i-1}+\gamma_nD_{n-1}=\sum_{i=1}^{\infty}b_iD_{i-1}
\end{equation*}
gives the unique expansion of $\gamma$ of the form $\sum_{i=1}^{\infty}b_iD_{i-1}$, called the $\alpha$--$expansion$ of $\gamma$, with the following properties \cite{Pinner}:
\begin{enumerate}
    \item $0\leq b_i\leq a_i-1$ for all $i$,
    \item the sequence $\{b_i\}_i$ does not contain a block of the form $b_s=a_s-1$ for some $s$, with $b_j=a_j-2$ for all $j>s$ or with $b_k=a_k-1$ for some $k>s$ and $b_j=a_j-2$ for all $k>j>s$.
\end{enumerate}
We define the sequence of integers $t_k$ by $b_k=\frac{1}{2}(a_k-2+t_k)$
\begin{equation}\label{generalgamma}
    \gamma=\sum_{i=1}^{\infty}\frac{1}{2}(a_i-2+t_i)D_{i-1}.
\end{equation}
and 
\begin{align*}
    d_k^-&:=\sum_{j=1}^{k}t_j\left(\frac{q_{j-1}}{q_k}\right)=t_k\bar\alpha_k+t_{k-1}\bar\alpha_k\bar\alpha_{k-1}+t_{k-2}\bar\alpha_k\bar\alpha_{k-1}\bar\alpha_{k-2}+\cdots ,\\
    d_{k}^{+}&:=\sum_{j=k+1}^{\infty}t_j\left(\frac{D_{j-1}}{D_{k-1}}\right)=t_{k+1}\alpha_{k}+t_{k+2}\alpha_{k}\alpha_{k+1}+t_{k+3}\alpha_{k}\alpha_{k+1}\alpha_{k+2}+\cdots .
\end{align*}
Notice that $t_k$ and $a_k$ have the same parity, and $-(a_k-2)\leq t_k\leq a_k$. It was observed in \cite{Pinner} that 
\begin{equation}\label{drange}
    -(1-\bar{\alpha}_k)\leq d_k^-\leq(1+\bar{\alpha}_k), \,\ \,\ -(1-\alpha_k)\leq d_k^+\leq(1+\alpha_k),
\end{equation}
with $d_k^+\geq1-\alpha_k$ (respectively $d_k^-\geq1-\bar{\alpha}_n$) if and only if the sequence $t_{k+1}, t_{k+2},...$ (respectively $t_{k}, t_{k-1},...$) has the form $t_j=a_j$ for some $j>k$ (respectively $j\leq k$) with $t_i=a_i-2$ for any $k<i<j$ (respectively $j<i\leq k$). Note that $t_i=a_i$ if and only if $b_i=a_i-1$.
When only finitely many of the $b_i=a_i-1$, it was shown in \cite[Theorem 1]{Pinner} that  the sequence of best positive and negative  inhomogeneous  approximations lies  amongst the
$$Q_k:=\sum_{i=1}^kb_iq_{i-1},\;\;\; Q_k+q_{k-1}, \;\;\; -(q_k-Q_k),\;\; \; -(q_k-q_{k-1}-Q_k). $$
\begin{lemma}\label{mainlemma}
If $\gamma\notin\mathbb{Z}+\alpha\mathbb{Z}$ and the $\alpha$--$expansion$ of $\gamma$ has $t_i=a_i$ at most finitely many times, then 
\begin{equation*}
    M(\alpha,\gamma)=\liminf_{k\rightarrow\infty}\min\{s_1(k), s_2(k), s_3(k), s_4(k)\},
\end{equation*}
where 
\begin{align*}
    s_1(k)&:=\frac{1}{4}(1-\bar\alpha_k+d_k^-)(1-\alpha_k+d_k^+)/(1-\bar\alpha_k\alpha_k),\\
     s_2(k)&:=\frac{1}{4}(1+\bar\alpha_k+d_k^-)(1+\alpha_k-d_k^+)/(1-\bar\alpha_k\alpha_k),\\
      s_3(k)&:=\frac{1}{4}(1-\bar\alpha_k-d_k^-)(1-\alpha_k-d_k^+)/(1-\bar\alpha_k\alpha_k),\\
       s_4(k)&:=\frac{1}{4}(1+\bar\alpha_k-d_k^-)(1+\alpha_k+d_k^+)/(1-\bar\alpha_k\alpha_k).
\end{align*}
\end{lemma}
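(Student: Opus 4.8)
The plan is to reduce $M(\alpha,\gamma)$ to the four explicit families supplied by \cite[Theorem 1]{Pinner} and then evaluate the resulting $\liminf$ termwise. Since $t_i=a_i$ is by definition equivalent to $b_i=a_i-1$, the hypothesis says that $b_i=a_i-1$ only finitely often, which is precisely the hypothesis of \cite[Theorem 1]{Pinner}; hence the sequence of best two-sided inhomogeneous approximations to $\gamma$ lies among $n=Q_k$, $n=Q_k+q_{k-1}$, $n=-(q_k-Q_k)$, $n=-(q_k-q_{k-1}-Q_k)$, so that $M(\alpha,\gamma)$ equals the $\liminf_{k\to\infty}$ of the minimum of the four products $|n|\,\|n\alpha-\gamma\|$ over these $n$. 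It then suffices to show each such product equals $s_j(k)+o(1)$ as $k\to\infty$ for the appropriate matching $j\in\{1,2,3,4\}$.

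First I would handle the approximation errors. Writing $P_k:=\sum_{i=1}^k b_i p_{i-1}$ and using $\gamma=\sum_{i=1}^k b_iD_{i-1}+\gamma_kD_{k-1}$ together with $D_{i-1}=q_{i-1}\alpha-p_{i-1}$ gives $Q_k\alpha-\gamma=P_k-\gamma_kD_{k-1}$, while $q_{k-1}\alpha\equiv D_{k-1}$ and $q_k\alpha\equiv D_k=\alpha_kD_{k-1}\pmod{\mathbb Z}$. Since $D_{k-1}\to 0$ and $0\le\gamma_k,\alpha_k<1$, for all large $k$ one reads off $\|Q_k\alpha-\gamma\|=\gamma_kD_{k-1}$, $\|(Q_k+q_{k-1})\alpha-\gamma\|=(1-\gamma_k)D_{k-1}$, $\|(q_k-Q_k)\alpha-\gamma\|=(\alpha_k+\gamma_k)D_{k-1}$ (from $(Q_k-q_k)\alpha-\gamma=(P_k-p_k)-(\alpha_k+\gamma_k)D_{k-1}$), and $\|(q_k-q_{k-1}-Q_k)\alpha-\gamma\|=|1-\alpha_k-\gamma_k|D_{k-1}$. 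For the multipliers I would substitute $b_i=\tfrac12(a_i-2+t_i)$ and apply the telescoping identities $\sum_{i=1}^k(a_i-2)q_{i-1}=q_k-q_{k-1}-1$ and $\sum_{i=k+1}^\infty(a_i-2)D_{i-1}=(1-\alpha_k)D_{k-1}$ (both from $a_iq_{i-1}=q_i+q_{i-2}$, $a_iD_{i-1}=D_i+D_{i-2}$), together with $\sum_{i=1}^k t_iq_{i-1}=q_kd_k^-$ and $\sum_{i=k+1}^\infty t_iD_{i-1}=D_{k-1}d_k^+$ from the definitions of $d_k^{\pm}$. This produces the closed forms $Q_k=\tfrac12 q_k(1-\bar\alpha_k+d_k^-)-\tfrac12$ and $\gamma_k=\tfrac12(1-\alpha_k+d_k^+)$, hence $Q_k+q_{k-1}=\tfrac12 q_k(1+\bar\alpha_k+d_k^-)-\tfrac12$, $q_k-Q_k=\tfrac12 q_k(1+\bar\alpha_k-d_k^-)+\tfrac12$, $q_k-q_{k-1}-Q_k=\tfrac12 q_k(1-\bar\alpha_k-d_k^-)+\tfrac12$, and $\alpha_k+\gamma_k=\tfrac12(1+\alpha_k+d_k^+)$, $1-\gamma_k=\tfrac12(1+\alpha_k-d_k^+)$, $1-\alpha_k-\gamma_k=\tfrac12(1-\alpha_k-d_k^+)$. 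Finally the determinant relation $p_kq_{k-1}-p_{k-1}q_k=1$ from \eqref{1} yields $q_kD_{k-1}(1-\bar\alpha_k\alpha_k)=1$. Multiplying multiplier by error, the constant $\pm\tfrac12$ contributes only $O(D_{k-1})=o(1)$, and one gets $|n|\,\|n\alpha-\gamma\|=s_1(k)+o(1)$ for $n=Q_k$, $=s_2(k)+o(1)$ for $n=Q_k+q_{k-1}$, $=s_4(k)+o(1)$ for $n=-(q_k-Q_k)$, and $=\tfrac14(1-\bar\alpha_k-d_k^-)\,|1-\alpha_k-d_k^+|/(1-\bar\alpha_k\alpha_k)+o(1)$ for $n=-(q_k-q_{k-1}-Q_k)$.

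The remaining point, which I expect to be the main nuisance, is the absolute value in the fourth product: one must show $1-\alpha_k-d_k^+\ge 0$ and $1-\bar\alpha_k-d_k^-\ge 0$ for all large $k$, so that this product is $s_3(k)+o(1)$. By the characterization recorded after \eqref{drange}, $d_k^+\ge 1-\alpha_k$ would force $t_j=a_j$ for some $j>k$, impossible for large $k$ under the hypothesis; and $d_k^-\ge 1-\bar\alpha_k$ would force $t_j=a_j$ for some $j\le k$ with $t_i=a_i-2$ for all $j<i\le k$, which, if it occurred for infinitely many $k$, would (letting $k\to\infty$) violate property (2) of the $\alpha$-expansion. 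The same positivity, or alternatively the observation that a bounded multiplier would take one value infinitely often and then, via the error estimate tending to $0$, force $\gamma\in\mathbb Z+\alpha\mathbb Z$ or $\alpha\in\mathbb Q$, shows every multiplier tends to infinity along its family; hence $\liminf_{|n|\to\infty}$ over the union of the four families equals $\liminf_{k\to\infty}$ of the minimum of the four products, and combining with the asymptotics above gives $M(\alpha,\gamma)=\liminf_{k\to\infty}\min\{s_1(k),s_2(k),s_3(k),s_4(k)\}$, as claimed.
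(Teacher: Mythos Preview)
Your derivation is correct. The paper itself does not prove this lemma: it is stated in the Preliminaries section as a known result, with the sentence immediately preceding it attributing the identification of the four families $Q_k$, $Q_k+q_{k-1}$, $-(q_k-Q_k)$, $-(q_k-q_{k-1}-Q_k)$ to \cite[Theorem~1]{Pinner}. So there is no in-paper proof to compare against; what you have written is exactly the natural extraction of the formula from that cited theorem, and your computations (the telescoping identities for $Q_k$ and $\gamma_k$, the relation $q_kD_{k-1}(1-\bar\alpha_k\alpha_k)=1$, and the use of the characterization following \eqref{drange} to remove the absolute value in the $s_3$ term) all check out and match the framework of \cite{Pinner}.
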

 We set $R:=\liminf_{i\rightarrow\infty}a_i$, where the $a_i$ are now  the partial quotients in the negative expansion \eqref{ContExpan}. When $R\geq 3$, an upper bound for $\rho(\alpha)$ was given in \cite[Corollary 1]{Pinner}  
\begin{equation}\label{upperbound}
    \rho(\alpha)\leq\frac{1}{4}\left(1-\frac{1}{R}\right).
\end{equation}
This is best possible when  $R$ is even  with $\rho\left([0;\overline{R,2k}]^-\right)\rightarrow \frac{1}{4}(1-1/R)$ as $k\rightarrow \infty$.
Our goal here is  to obtain a lower bound for $\rho(\alpha)$ when $R\geq 3$. For this, we first construct a $\gamma^*\in(0,1)$ and then we use Lemma \ref{mainlemma} to compute $M(\alpha,\gamma^*)$, which gives a lower bound $\rho(\alpha)\geq M(\alpha,\gamma^*)$. \\

\section{Main results}

Consider a real number $\gamma^*\in(0,1)$ which has the unique $\alpha$--expansion
\begin{equation}\label{gamma*}
    \gamma^*=\sum_{i=1}^{\infty}b_iD_{i-1}=\sum_{i=1}^{\infty}\frac{1}{2}(a_i-2+t_i)D_{i-1},
\end{equation}
where the sequence $\{t_i\}$ is given by 
 \[
  t_i=
  \begin{cases}
     0, &\text{if $a_i$ is even,}\\
     (-1)^{j+1}, & \text{if $a_i$ is the \textit{j}th odd partial quotient}. 
  \end{cases}
  \]
Notice that any two nonzero consecutive $t_i$ have opposite signs and hence $|d_{k}^{-}|\leq\bar{\alpha}_k,$   $|d_{k}^{+}|\leq\alpha_k,$
 and $d_{k}^{-} d_k^{+}\leq 0$. We define two numbers $\beta$ and $\delta$
 \begin{equation}\label{bd}
    \beta:=[0; \overline{R_*} ]^-=\frac{1}{2}\left(R_*-\sqrt{R_*^2-4}\right), \,\ \,\ \,\ \,\ \delta:=[0; R_{**}, \overline{R_*} ]^-=\frac{1}{R_{**}-\beta},
\end{equation}
where
\[
  R_*,\,\ R_{**}:=
  \begin{cases}
     R,\,\ R+1, &\text{if $R$ is even},\\
     R+1,\,\ R, & \text{if $R$ is odd}.
  \end{cases}
  \]
We set 
\begin{equation}\label{C}
    C(R):=\frac{(1-2\delta)(1-\beta)}{4(1-\delta\beta)},
\end{equation}
 observing that if $R$ is even
$$ C(R)= \frac{1}{4} \left(\frac{R-2}{\sqrt{R^2-4}+1}\right), $$
and if $R$ is odd
$$ C(R)=  \frac{1}{4} \left( \frac{2R-2-\sqrt{(R+1)^2-4}}{\sqrt{(R+1)^2-4}-1}\right).  $$
The value of $M(\alpha,\gamma^*)$ gives us a lower bound for $\rho(\alpha)$.

\begin{theorem}\label{mymaintheorem}
Suppose that (\ref{ContExpan}) gives the negative continued fraction expansion of $\alpha$ and  $R=\liminf_{i\rightarrow \infty}a_i\geq 3$. Then, with $\gamma^*$ as in $(\ref{gamma*})$ and $C(R)$ as in $(\ref{C})$ we have
\be  \label{C>}
  \rho(\alpha)\geq   M(\alpha,\gamma^*)\geq C(R).
\ee
In particular, when $R=3$, 
$$ \rho(\alpha)\geq C(3)=\frac{1}{6\sqrt{3}+8}=\frac{1}{18.3923\ldots},$$
and when $R\geq 4$
$$ \rho(\alpha)\geq C(4)=\frac{1}{4\sqrt{3}+2}=\frac{1}{8.9282\ldots}. $$

\end{theorem}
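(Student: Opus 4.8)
The plan is to apply Lemma \ref{mainlemma} to the specific $\gamma^*$ constructed in \eqref{gamma*} and show that the $\liminf$ of $\min\{s_1(k),s_2(k),s_3(k),s_4(k)\}$ is bounded below by $C(R)$. First I would verify that $\gamma^*\notin\mathbb{Z}+\alpha\mathbb{Z}$ and that its $\alpha$-expansion has $t_i=a_i$ only finitely often (in fact never, since $t_i\in\{0,\pm 1\}$ while $a_i\geq 2$), so that the lemma genuinely applies; I would also check properties (1) and (2) of the $\alpha$-expansion are satisfied by the prescribed $\{t_i\}$ sequence. Then, using the observation recorded right after \eqref{gamma*} — that consecutive nonzero $t_i$ alternate in sign, giving $|d_k^-|\le\bar\alpha_k$, $|d_k^+|\le\alpha_k$, and $d_k^-d_k^+\le 0$ — I would reduce the four quantities $s_j(k)$ to something controllable. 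The sign condition $d_k^-d_k^+\le 0$ means that along any subsequence, one of $d_k^-,d_k^+$ is $\geq 0$ and the other $\leq 0$; a short case analysis should show the binding constraint is always one of $s_1$ or $s_3$ (the "same-sign-subtracted" products), so that effectively $M(\alpha,\gamma^*)=\liminf_k \tfrac14(1-\bar\alpha_k-|d_k^-|)(1-\alpha_k-|d_k^+|)/(1-\bar\alpha_k\alpha_k)$ up to relabeling, or at least that this is what we must bound below.

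Next I would pass to a subsequence of indices $k$ along which $a_k\to R$ (and hence $a_k=R$ for all large $k$ in the subsequence, since $a_k$ are integers $\geq R$). The key estimates are then: a lower bound on $1-\bar\alpha_k-|d_k^-|$ and $1-\alpha_k-|d_k^+|$, and an upper bound on $1-\bar\alpha_k\alpha_k$, each in terms of $\beta$ and $\delta$. The extremal configuration should be the one where $\alpha_k$ is as large as possible given $a_{k+1}=R_*$ or $R_{**}$ (depending on parity of $R$ and on which $a_i$ are odd), i.e.\ where the tail continued fraction looks like $[0;\overline{R_*}]^-=\beta$ or $[0;R_{**},\overline{R_*}]^-=\delta$; similarly for $\bar\alpha_k$ from the other side. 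The parity bookkeeping in the definition of $R_*,R_{**}$ is exactly designed so that when $R$ is odd the relevant tail is $[0;R+1,\overline{R}]$-type rather than $[0;\overline{R}]$, accounting for the asymmetry in $C(R)$. I would substitute these extremal values, and the product $(1-2\delta)(1-\beta)/(4(1-\delta\beta))$ should emerge after simplification using $\beta=1/(R_*-\beta)$ and $\delta=1/(R_{**}-\beta)$; the two closed-form expressions for $C(R)$ in the even and odd cases are then just algebraic simplifications to be quoted, and the values $C(3)=1/(6\sqrt 3+8)$ and $C(4)=1/(4\sqrt 3+2)$ follow by plugging in $R=3,4$.

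The main obstacle I anticipate is making the extremality argument rigorous rather than heuristic: one must show that $(1-\bar\alpha_k-|d_k^-|)(1-\alpha_k-|d_k^+|)/(1-\bar\alpha_k\alpha_k)$ cannot dip below $C(R)$ infinitely often, and this requires simultaneously controlling $\bar\alpha_k$, $\alpha_k$, $d_k^-$, and $d_k^+$, which all depend on overlapping stretches of the partial-quotient sequence. The quantities $d_k^\pm$ depend on the $t_j$, which in turn depend on the parity pattern of the $a_j$ and on the parity of their position in the subsequence of odd partial quotients — so the worst case is not simply "all $a_j=R$" but must account for which $a_j$ are odd (contributing $t_j=\pm1$) versus even (contributing $t_j=0$). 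I would handle this by establishing monotonicity of the relevant one-variable functions (e.g.\ $x\mapsto (1-x-d(x))\cdots$ as a function of the tail) so that replacing every free partial quotient by its extreme admissible value $R_*$ or $R_{**}$ can only decrease the product, reducing everything to the periodic model $\alpha=[0;\overline{R_*}]^-$ or $[0;R_{**},\overline{R_*}]^-$ where $d_k^\pm$ can be computed in closed form; the inequality \eqref{upperbound} from \cite[Corollary 1]{Pinner} serves as a useful sanity check that the bound $C(R)$ does not exceed $\tfrac14(1-1/R)$.
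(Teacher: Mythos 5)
Your framework matches the paper's: apply Lemma \ref{mainlemma} to $\gamma^*$, use $|d_k^-|\le\bar\alpha_k$, $|d_k^+|\le\alpha_k$ to discard $s_2,s_4$ (each is $\ge\tfrac14$), and then bound $s_1,s_3$ from below by the extremal configuration via monotonicity (the paper's Lemma \ref{calculus}). For $R$ even this plan goes through essentially as you describe. But there is a genuine gap in the odd case, precisely at the obstacle you flag and then propose to dispatch by ``replacing every free partial quotient by its extreme admissible value.'' When $R$ is odd one has $\beta=[0;\overline{R+1}]^-$ and $\delta=[0;R,\overline{R+1}]^-$, and a tail beginning $R,R+1,\dots,R+1,R$ gives $\alpha_k>\delta$; so the factor $1-\alpha_k-d_k^+$ cannot be bounded below by substituting the ``extremal'' tail value for $\alpha_k$ alone, and the substitution is not monotone in each partial quotient because changing an $a_j$ changes the parity pattern and hence which $t_j$ are nonzero and with what signs. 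The paper resolves this with a compensation argument (its Lemmas 4.2 and 4.3): whenever $\alpha_k>\delta$, the first odd partial quotient $a_{n+1}$ in the tail forces $t_{n+1}=-1$, contributing a negative term $-\alpha\alpha_1\cdots\alpha_n$ to $d_k^+$, and the explicit convergent identity
$$\alpha-\nu=\frac{2\alpha\alpha_1\cdots\alpha_n}{q_{n+1}+(2-\alpha_{n+1})q_n},\qquad \nu:=[0;a_1,\dots,a_n,a_{n+1}+2,a_{n+2},\dots]^-<\delta,$$
shows the excess $\alpha-\nu$ is at most $\tfrac14\alpha\alpha_1\cdots\alpha_n$, i.e.\ is absorbed by the gain in $d_k^+$. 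This quantitative trade-off is the missing idea; without it (or an equivalent), the odd case does not close. Two smaller points: to lower-bound a $\liminf$ you must control \emph{every} large $k$, not a subsequence where $a_k=R$; and even in the odd case with both $a_k,a_{k+1}$ odd, the relevant bound on $\bar\alpha_k$ is $\sigma=[0;\overline{R}]^-$, which exceeds $\delta$, so a further ad hoc estimate (as in the paper) is needed there too.
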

\noindent
For $R\geq 3$ the value of $C(R)$ improves the lower bound $C^*(R)$ of \cite[Theorem 4]{Arxiv}:

\be
\begin{array}{c|cc} \label{table}
R & C^*(R)^{-1} & C(R)^{-1} \\ \hline
2 & 25.1592\ldots  &  - \\
 3 & 20.4874\ldots  & \ds 18.3923\ldots\\
4 & 9.3372\ldots  &\ds  8.9282\ldots \\
5 & 8.2500\ldots  & \ds 7.9497\ldots\\
6 & 6.8120\ldots  &\ds  6.6568\ldots \\
7 &  6.4643\ldots  & \ds 6.3431\ldots\\ 
8 & 5.9109\ldots  &\ds  5.8306\ldots\end{array} 
\ee

\vskip0.1in
Of course,  if all the $a_i\geq 3$ in the negative   expansion the negative and nearest integer continued fraction expansions coincide. That is,
 the lower bound $C^*(R)$, $R\geq 3$, actually applies to a much larger class of $\alpha$ than $C(R)$ (and so is not surprisingly smaller).
Better bounds are also given in \cite{Arxiv} when the nearest integer expansion coincides with the regular expansion.
\noindent
Notice that the bound  $C(R)$ increases to $1/4$ as $R\rightarrow \infty$; in particular, from (\ref{upperbound}) and Theorem \ref{mymaintheorem},  when $R\geq 3$
\be \label{iff} \rho(\alpha)= \frac{1}{4} \text{ if and only if  } R=\infty. \ee
Fukasawa \cite{Fuk} showed that  \eqref{iff} holds without the $R\geq 3$ condition when  using the nearest integer continued fraction expansion 
(the restriction needed here since large partial quotients in the regular expansion will cause long strings of 2's in the negative expansion).
We note the asymptotic behavior of $C(R)$; when $R\geq 4$ is even
\be \label{AsympEven} C(R) = \cfrac{1}{4}\left(1-\cfrac{3}{R}+\cfrac{5}{R^2}-\cfrac{E_1(R)}{R^3}\right),  \;\; 7.3268 < E_1(R) <11,
\ee
and when $R\geq 3$ is odd
\be  \label{AsympOdd}  C(R)=  \cfrac{1}{4}\left(1-\cfrac{3}{R}+\cfrac{4}{R^2}-\cfrac{E_2(R)}{R^3}\right),\;\;\;  6.1279 < E_2(R)<10.   \ee
For comparison, we note the \cite{Arxiv} bounds
\be \label{ArxivAsym}  C^*(R)=\begin{cases} \frac{1}{4} \left( 1-\frac{3}{R} +\frac{4}{R^2} + O\left(R^{-3}\right)\right), & \text{ if $R$ is even,}\\
\frac{1}{4} \left( 1-\frac{3}{R} +\frac{3}{R^2} + O\left(R^{-3}\right)\right), & \text{ if $R$ is odd,}\end{cases} \ee
with this lower bound asymptotically optimal (and hence $C^*(R)$ inevitably smaller than $C(R)$) when $R$ is even. The optimal  $O(R^{-2})$ term  in a $C^*(R)$ bound remains undetermined when $R$ is odd.

Our lower bound $C(R)$ for $\rho(\alpha)$ is optimal when $R$ is even. 

\begin{theorem}\label{myexample}
With even $R\geq4$, if $\alpha$ has negative continued fraction expansion of period 
$
     R+1, (R,)^l,
$
then 
$\rho(\alpha)\rightarrow C(R)$ as   $ l\rightarrow\infty.$

\end{theorem}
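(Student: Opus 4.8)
The plan is to fix even $R\geq 4$, take $\alpha=\alpha_l$ with negative continued fraction expansion purely periodic of period $R+1,(R,)^l$, and show two inequalities: $\rho(\alpha_l)\geq C(R)-o(1)$ (which is essentially immediate from Theorem \ref{mymaintheorem}, since $\liminf a_i=R$ for every such $\alpha_l$, giving $\rho(\alpha_l)\geq C(R)$ outright, so really only the reverse direction needs work) and $\rho(\alpha_l)\leq C(R)+o(1)$ as $l\to\infty$. For the upper bound I would argue that any $\gamma\notin\mathbb{Z}+\alpha_l\mathbb{Z}$ has, infinitely often, a long block of indices $k$ lying strictly inside a run of $a_i=R$'s (because the single $R+1$ per period is isolated and $l$ is large), and at such $k$ the quantities $\bar\alpha_k,\alpha_k$ are both close to $\beta=[0;\overline R]^-$. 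The idea is then to estimate $M(\alpha_l,\gamma)$ from above by evaluating $\min\{s_1(k),s_2(k),s_3(k),s_4(k)\}$ in Lemma \ref{mainlemma} at one such well-chosen $k$, using the parametrization $b_k=\tfrac12(a_k-2+t_k)$ and the range constraints \eqref{drange} on $d_k^{\pm}$.

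The key steps, in order: (1) Record the exact algebraic facts about $\alpha_l$: since the period is $R+1,(R,)^l$, the tails $\alpha_k$ and reversed tails $\bar\alpha_k$ take finitely many values, and for $k$ in the interior of the $R$-run one has $\alpha_k=[0;\overline R,\dots]^-\to\beta$ and $\bar\alpha_k\to\beta$ as the distance to the nearest $R+1$ grows; quantify the error as $O(\beta^{d})$ where $d$ is that distance, using that $\beta<1$. (2) For a given $\gamma$, choose an index $k$ deep inside an $R$-run where additionally we control the local $t$-pattern: here I would split on the parity bookkeeping in \eqref{gamma*}-style expansions, noting that inside a run of even partial quotients every $t_i=0$, so $d_k^-$ and $d_k^+$ are governed entirely by the contributions from the distant odd partial quotients (the isolated $R+1$'s), hence $|d_k^-|=O(\beta^{d})$ and $|d_k^+|=O(\beta^{d})$ — \emph{unless} $\gamma$'s expansion forces $t$ to be large somewhere nearby, which only helps make some $s_j(k)$ smaller. (3) Plug $\bar\alpha_k\approx\alpha_k\approx\beta$ and $d_k^\pm\approx 0$ into $s_1,\dots,s_4$; the four values collapse (to leading order) to $\tfrac14(1-\beta)(1+\beta)/(1-\beta^2)=\tfrac14$ for $s_2,s_4$ and to $\tfrac14(1-\beta)^2/(1-\beta^2)=\tfrac14\frac{1-\beta}{1+\beta}$ for $s_1,s_3$; but this is not yet $C(R)$, so one must instead choose $k$ to sit at or just after the isolated $R+1$, where $\bar\alpha_k$ or $\alpha_k$ equals $\delta=[0;R_{**},\overline{R_*}]^-$ to within $O(\beta^d)$ and a nonzero $t$ appears, reproducing exactly the configuration $(1-2\delta)(1-\beta)/(4(1-\delta\beta))=C(R)$ from \eqref{C}. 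Show this configuration is forced to occur (up to $o(1)$) for \emph{every} admissible $\gamma$ infinitely often, so $M(\alpha_l,\gamma)\leq C(R)+o(1)$ uniformly in $\gamma$, whence $\rho(\alpha_l)\leq C(R)+o(1)$. (4) Combine with $\rho(\alpha_l)\geq C(R)$ from Theorem \ref{mymaintheorem} and let $l\to\infty$.

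The main obstacle I anticipate is step (3)'s forcing argument: showing that \emph{no} choice of $\gamma$ can avoid having its $\alpha$-expansion produce, near each isolated $R+1$, a $d_k^\pm$ pattern that keeps all four $s_j(k)$ above $C(R)+\varepsilon$. This is a combinatorial optimization over the admissible sequences $\{t_i\}$ (subject to property (2) of the $\alpha$-expansion and the parity constraint $t_i\equiv a_i$), essentially a discrete analogue of the continuous optimization that pins down $C(R)$ in Theorem \ref{mymaintheorem}; one expects it to reduce, after using $d_k^-d_k^+\le 0$-type sign considerations and the linear dependence of $s_j$ on $d_k^\pm$, to checking a small number of extreme local patterns (the $t_i$ near the $R+1$ being $0$ or $\pm 1$ on the $R$-side and taking one of a few values on the $R+1$-side). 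A secondary technical point is making the $O(\beta^d)$ error terms genuinely uniform in $\gamma$ and confirming that the $\liminf$ over $k$ is actually attained along the subsequence of $k$'s we select rather than being smaller elsewhere — but since we only need an upper bound on the $\liminf$, exhibiting one good subsequence of $k$ for each $\gamma$ suffices, which simplifies matters considerably.
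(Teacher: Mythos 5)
Your skeleton matches the paper's: the lower bound $\rho(\alpha_l)\geq C(R)$ is immediate from Theorem \ref{mymaintheorem}, and the upper bound must come from showing that \emph{every} admissible $\gamma$ has, infinitely often, some $s_j(k)\leq C(R)+o(1)$, located at or adjacent to the isolated partial quotient $R+1$. You also correctly identify that indices deep inside the run of $R$'s only give the value $\tfrac14(1-\beta)/(1+\beta)>C(R)$, so the action is at the $R+1$'s. But the entire substance of the proof is the forcing/elimination argument you defer to step (3), and the heuristic you offer in its place --- that a large $|t_i|$ nearby ``only helps make some $s_j(k)$ smaller'' --- is not true at the local level. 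Take $\gamma$ whose $t_i$ alternate $2,-2,2,-2,\dots$ along the run of $R$'s (admissible since $-(R-2)\leq t_i\leq R$ for $R\geq 4$). At such $k$ one has $d_k^-\approx 2\beta/(1+\beta)$, $d_k^+\approx -2\beta/(1+\beta)$, and all four of $s_1(k),\dots,s_4(k)$ are approximately $\tfrac14(1-2\beta+O(\beta^2))$, which exceeds $C(R)=\tfrac14(1-3\beta+O(\beta^2))$. So pointwise, large $t$'s do \emph{not} push any $s_j$ below $C(R)$; one must track how such blocks interact with the odd partial quotient $R+1$, where $t$ is forced to be odd.

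Concretely, what is missing is the case analysis the paper carries out: (a) disposing of $t_k=a_k$ occurring infinitely often via the bound $M(\alpha,\gamma)\leq \bar\alpha_k/(4(1-\bar\alpha_k\alpha_k))$; (b) the geometric-mean inequality $\min\{s_3(k),s_4(k)\}\leq (1-d_k^-)/(4(1-\bar\alpha_k\alpha_k))$, which is the tool that converts a large $d_k^-$ into a bound \emph{below} $C(R)$ rather than merely ``smaller than before''; and (c) the block lemmas ruling out, in turn, two positive $t$'s separated by zeros with an even $a_i$ among them, any $t_k\geq 3$, and the blocks $(t_k,t_{k+1})=(1,-2)$ and $(0,-2)$, each requiring a specific algebraic inequality in $\beta$ and $\delta$ (e.g.\ $4\beta+\beta^2-5\delta\beta<1$). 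Only after these eliminations is one left with the $\gamma^*$-like pattern ($t_i=0$ on the $R$'s, $t_i=\pm1$ on the $R+1$'s), for which $\bar\alpha_k\to\delta$, $d_k^-\to\delta$, $d_k^+\to0$ gives $s_3(k)\to C(R)$. Your proposal names the right target configuration but does not supply, and underestimates the delicacy of, the combinatorial elimination that makes the upper bound uniform over $\gamma$.
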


When $R\geq3$ is odd, it will be clear from the proof of Theorem \ref{mymaintheorem} that if $\alpha$ has negative continued fraction expansion of period 
$  R, (R+1,)^l,$
then $M(\alpha,\gamma^*)\rightarrow C(R)$ as $l\rightarrow\infty.$ So the bound $M(\alpha,\gamma*)\geq C(R)$ in Theorem \ref{mymaintheorem} is  still best possible.  However $\gamma^*$ is no longer the best choice of $\gamma$; as we observe at the end of the paper, for $R\geq 5$  these $\alpha$  have
\be \label{otheralpha} \lim_{l\rightarrow \infty} \rho(\alpha)=\frac{\left(1-2\delta+\frac{2\delta\beta}{1+\beta}\right)(1-\beta) }{4(1-\delta\beta)}=\frac{1}{4}\left( 1-\frac{3}{R}+\frac{6}{R^2} + O\left(R^{-3}\right)\right).\ee

We need a more complicated example to show the asymptotic sharpness of  our lower bound when $R$ is odd.

\begin{theorem}\label{asymptotic}
If $R$ is odd and  $\alpha$ has negative continued fraction  expansion
\begin{align*}
    \alpha=[0;\overline{R,R,R+1,R,R+1,R+1,R,R+1,R+1,R,R+1}]^-,
\end{align*}
then
\begin{equation*}
    \rho(\alpha)=\frac{1}{4}\left(1-\frac{3}{R}+\frac{4}{R^2}+O\left(R^{-3}\right)\right).
\end{equation*}
\end{theorem}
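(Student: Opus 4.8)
The strategy is to compute $\rho(\alpha)$ for this specific periodic $\alpha$ by first identifying a candidate extremal $\gamma$, then applying Lemma~\ref{mainlemma} to get $M(\alpha,\gamma)$, and finally proving no other $\gamma$ does better. Since the period
$R,R,R+1,R,R+1,R+1,R,R+1,R+1,R,R+1$
has length $11$ with $6$ copies of $R$ and $5$ copies of $R+1$ (hence $5$ odd partial quotients per period when $R$ is odd), the number $\gamma^*$ from \eqref{gamma*} is periodic with the same period up to the alternating-sign rule on the odd entries; because the number of odd partial quotients per period is odd, the sign pattern shifts by one each period, so $\gamma^*$ is genuinely purely periodic only after doubling the period. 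First I would write down explicitly the eventual sequences $\bar\alpha_k$, $\alpha_k$, $d_k^\pm$ along each residue class mod $22$ (or mod $11$ with a sign flip), reducing the $\liminf$ in Lemma~\ref{mainlemma} to a finite minimization over the periodic orbit. The quantities $\bar\alpha_k$ and $\alpha_k$ take values among $\beta$, $\delta$, and a handful of other continued-fraction tails $[0;\overline{\ldots}]^-$ built from $R$ and $R+1$, all expressible as algebraic functions of $R$.

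Second, I would expand everything asymptotically in powers of $1/R$. From \eqref{bd}, $\beta = 1/R + 1/R^3 + \cdots$ and $\delta = 1/R + 1/R^2 + \cdots$ (with $R_*=R+1$, $R_{**}=R$ since $R$ is odd), and the other tails that appear have similar expansions $= 1/R + O(1/R^2)$. The four quantities $s_1(k),\ldots,s_4(k)$ in Lemma~\ref{mainlemma} are then rational functions of these tails and of the $d_k^\pm$ (which are themselves $O(1/R)$ alternating sums), so each $s_i(k)$ has an expansion of the shape $\tfrac14(1 - c_1/R + c_2/R^2 + O(1/R^3))$. I would need to check that over the full orbit the minimum of these behaves like $\tfrac14(1 - 3/R + 4/R^2 + O(R^{-3}))$ — that is, the worst residue class for this $\gamma$ gives exactly the claimed constant. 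The specific eleven-term period is presumably reverse-engineered so that the "$+4/R^2$'' coefficient is realized simultaneously at the minimizing $k$'s rather than the "$+5/R^2$'' or "$+6/R^2$'' that cruder periodic choices produce (cf.\ \eqref{AsympEven}, \eqref{AsympOdd}, \eqref{otheralpha}); so the first task is to verify this delicate cancellation for the chosen $\gamma$.

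Third — and this is the genuinely hard part — I must show $\rho(\alpha)$ is not \emph{larger} than this, i.e.\ that for \emph{every} admissible $\gamma \notin \mathbb{Z}+\alpha\mathbb{Z}$ one has $M(\alpha,\gamma) \le \tfrac14(1-3/R+4/R^2+O(R^{-3}))$. Here I would again use Lemma~\ref{mainlemma}, but now as an upper bound: it suffices to exhibit, for each $\gamma$, an infinite sequence of indices $k$ along which $\min\{s_1(k),\ldots,s_4(k)\}$ is at most the target. Since $\alpha$ is purely periodic, along any fixed residue class mod $11$ the tails $\bar\alpha_k,\alpha_k$ are constant, so $s_i(k)$ depends only on the $d_k^\pm$, which range over a bounded interval determined by \eqref{drange}. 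The key inequality is a pointwise statement: for the relevant fixed values of $(\bar\alpha_k,\alpha_k)$ — occurring infinitely often — and for \emph{all} $(d^-,d^+)$ in the allowed box with $d^-d^+\le 0$ or without that restriction (since arbitrary $\gamma$ need not satisfy the sign alternation),
\[
\min\{s_1,s_2,s_3,s_4\} \le \tfrac14\left(1-\tfrac3R+\tfrac4{R^2}+O(R^{-3})\right).
\]
Proving this reduces to analyzing where the pointwise maximum of $\min\{s_1,\ldots,s_4\}$ over the $(d^-,d^+)$-box is attained — a piecewise-bilinear optimization — at the residue class(es) where $\bar\alpha_k$ and $\alpha_k$ are smallest (closest to $\beta$ or to the $[0;\overline{R,\ldots}]^-$ tails of order $1/R$). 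The obstacle is that one cannot optimize $d^\pm$ freely: admissibility conditions (1)--(2) on the $b_i$ and the compatibility of $d_k^-$ with $d_{k+1}^-$ across residue classes constrain the attainable $(d^-,d^+)$ pairs, and the chosen eleven-periodic $\alpha$ is precisely engineered so that at \emph{every} residue class some constraint prevents $\gamma$ from beating $C(R) + O(R^{-3})$. I would organize this as: (a) list the finitely many residue classes and their $(\bar\alpha_k,\alpha_k)$; (b) for each, solve the bilinear max of $\min s_i$ over the unconstrained box to get the "local ceiling''; (c) observe these local ceilings are all $\ge$ target, so a naive argument fails, then (d) use the cross-class constraints — chiefly that a large positive $d_k^+$ at one index forces (via the $t_j = a_j$, $t_i = a_i-2$ characterization in \eqref{drange}) the sequence at later indices, which in turn controls $d^-$ at a subsequent minimizing class — to knock the effective ceiling down to the target along an infinite subsequence. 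Matching the $O(R^{-2})$ term exactly is what pins down the particular period; verifying that the $11$ given values thread this needle is the crux, and I expect it to require the most care.
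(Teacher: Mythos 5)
Your plan has the right architecture (lower bound from one explicit $\gamma$, upper bound by showing every admissible $\gamma$ is beaten infinitely often), but it stops exactly where the proof begins. Two preliminary remarks first. The lower bound needs no computation at all: Theorem \ref{mymaintheorem} gives $\rho(\alpha)\geq M(\alpha,\gamma^*)\geq C(R)$ and \eqref{AsympOdd} already says $C(R)=\frac14(1-3/R+4/R^2+O(R^{-3}))$, so your period-$22$ orbit analysis of $\gamma^*$ is unnecessary. Also, your stated expansions of $\beta$ and $\delta$ are wrong for odd $R$: with $R_*=R+1$ one has $\beta=[0;\overline{R+1}]^-=\frac1R-\frac1{R^2}+O(R^{-3})$ and $\delta=[0;R,\overline{R+1}]^-=\frac1R+O(R^{-3})$, not $\frac1R+\frac1{R^3}+\cdots$ and $\frac1R+\frac1{R^2}+\cdots$ as you wrote; plugging your versions into \eqref{C} would give $\frac14(1-3/R+1/R^2+\cdots)$, which does not even match the theorem, so the second-order bookkeeping you flag as "delicate" is already off before you start.

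The genuine gap is step (d) of your upper-bound outline. You correctly observe in (c) that optimizing $\min\{s_1,\dots,s_4\}$ freely over the $(d^-,d^+)$ box at each residue class gives ceilings above the target, so the whole burden falls on the constraint propagation — and you offer no mechanism for it beyond "use the cross-class constraints to knock the effective ceiling down." That propagation is the entire proof. The paper executes it as a sequential elimination of blocks of $t_i$ occurring infinitely often: first $t_k=a_k$ via \eqref{s3s4}; then $|t_k|\geq 5$, $t_k=4$, and $t_k=3$ adjacent to an $R+1$, all via \eqref{1s3s4} and $d_k^-\geq 3/R-3/R^2+O(R^{-3})$; then consecutive same-sign nonzero $t_i$, the block $0,1,0,0$, the block $2,0$, and the block $-2,1,-2$, each by a direct expansion of some $\tilde s_j(k)=4(1-\bar\alpha_k\alpha_k)s_j(k)$ to order $R^{-2}$; and finally the surviving patterns are forced, on the sub-block $R+1,R+1,R,R+1,R,R$ of the chosen period, into $t$-values $0,0,1,-2,1,-1$ whose $\tilde s_1$ computes to exactly $1-3/R+3/R^2+O(R^{-3})$. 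Without carrying out this case analysis (or an equivalent one), nothing in your proposal certifies that an adversarial $\gamma$ cannot exceed $\frac14(1-3/R+4/R^2)$ by an $R^{-2}$-sized margin, which is precisely what the theorem asserts cannot happen; you acknowledge this is "the crux" but leave it entirely undone.
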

For $R=3,5$ and 7 the period two examples
\begin{align*}
\rho\left([0;\overline{3,5}]^-\right) & =\frac{13}{11\sqrt{165}}=\frac{1}{10.8690\ldots}, \\
 \rho\left([0;\overline{5,6}]^-\right) &=\frac{589}{312\sqrt{195}}=\frac{1}{7.3970\ldots},\\
 \rho\left([0;\overline{7,8}]^-\right) &=\frac{3649}{1664\sqrt{182}}=\frac{1}{6.1519\ldots}, 
\end{align*}
 from \cite{Pinner2}
give upper bounds on the optimal $C(R)$.

\section{Proof Of Theorem 1}

We shall make frequent use of the following simple observation.

\begin{lemma} \label{calculus} If $\lambda >\mu>0$ then  $\displaystyle f(z)=\frac{1-\lambda z}{1-\mu z}$ is decreasing for $0\leq \lambda z <1$,
\end{lemma}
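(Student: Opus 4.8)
The plan is to reduce Lemma~\ref{calculus} to an elementary one-variable monotonicity check. First I would record that on the stated range $0\le \lambda z<1$ we automatically have $0\le \mu z\le \lambda z<1$, since $0<\mu<\lambda$; in particular the denominator $1-\mu z$ is strictly positive throughout, so $f$ is well defined and differentiable on this interval and there is no risk of a sign change or a pole. Then I would compute the derivative directly by the quotient rule,
$$ f'(z)=\frac{-\lambda(1-\mu z)+\mu(1-\lambda z)}{(1-\mu z)^2}=\frac{\mu-\lambda}{(1-\mu z)^2}. $$
Since $\lambda>\mu$, the numerator $\mu-\lambda$ is negative while the denominator is positive, so $f'(z)<0$ on the whole interval, which is exactly the assertion that $f$ is (strictly) decreasing there.

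A calculus-free alternative, which I would at least mention, is to rewrite $f$ by partial fractions: using the identity $\lambda(1-\mu z)-\mu(1-\lambda z)=\lambda-\mu$ one gets
$$ f(z)=\frac{1-\lambda z}{1-\mu z}=\frac{\lambda}{\mu}-\frac{\lambda-\mu}{\mu\,(1-\mu z)}. $$
On $0\le \lambda z<1$ the quantity $1-\mu z$ is positive and strictly decreasing in $z$, hence $1/(1-\mu z)$ is positive and strictly increasing; as $(\lambda-\mu)/\mu>0$, the subtracted term strictly increases, and therefore $f(z)$ strictly decreases. This form has the minor advantage of exhibiting how the decrease is governed by the gap $\lambda-\mu$, which is the feature used later when comparing the quantities $s_1(k),\dots,s_4(k)$ of Lemma~\ref{mainlemma}.

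I do not expect any genuine obstacle here: the content of the lemma is entirely in choosing the domain so that $1-\mu z$ neither vanishes nor changes sign, and this is guaranteed precisely by the two hypotheses $\mu<\lambda$ and $\lambda z<1$ taken together. In the write-up I would give the two-line derivative computation as the proof, and perhaps append the partial-fraction identity as a remark for intuition; strictness of the monotonicity is immediate from $f'(z)<0$ (equivalently, from $\lambda-\mu>0$ in the second form), and can be stated explicitly if it is needed in the applications.
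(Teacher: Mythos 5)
Your proposal is correct and its main argument (the quotient-rule computation $f'(z)=-(\lambda-\mu)/(1-\mu z)^2<0$) is exactly the paper's one-line proof; the partial-fraction remark is a harmless addition. Nothing more is needed.
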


In particular, if $\lambda_1,\lambda_2>1$ and $0\leq x\leq \alpha,$ $0\leq y\leq \beta$,  with $\lambda_1\alpha,\lambda_2\beta<1$, then
$$ \frac{(1-\lambda_1 x)(1-\lambda_2y)}{1-xy}\geq  \left( \frac{1-\lambda_1 \alpha}{1-\alpha y}\right)(1-\lambda_2y) \geq \frac{(1-\lambda_1 \alpha)(1-\lambda_2 \beta)}{1-\alpha \beta}.$$

\begin{proof} Plainly $f'(z)=-(\lambda-\mu)/(1-z\mu)^2<0$ for $0\leq z <\mu^{-1}$.
\end{proof}

\begin{proof}[Proof of Theorem \ref{mymaintheorem}]
From our construction of $\gamma^*$, we have $|d_k^-|\leq\bar\alpha_k$, $|d_k^+|\leq\alpha_k$, and $s_2(k),s_4(k)\geq \frac{1}{4}$.  Hence, by Lemma \ref{mainlemma}, we have 

\begin{equation}\label{s1,s3}
    M(\alpha,\gamma^*)=\liminf_{k\rightarrow\infty}\min\{s_{1}(k), s_{3}(k)\}.
\end{equation}
Since we are evaluating $\liminf$ on $k$, from now on whenever we see the index $k$, it will be understood that we are letting $k\rightarrow\infty$. Also, we may assume that $a_i\geq R$ for all $i$.

Observe that changing the signs of $t_i$ only interchanges $s_1(i)$ with $s_3(i)$. Hence, as long as we check both signs on the $t_i$, it will be enough to show that
$$s_3(k)\geq C(R). $$

We also observe that interchanging the pairs $(a_{k-i}, t_{k-i})$ with $(a_{k+1+i}, t_{k+1+i})$ for all $i\geq0$ only interchanges $\bar\alpha_k$ with $\alpha_k$ and $d_k^-$ with $d_k^+$.

The proof when $R$ is even is straightforward.

\vskip0.1in
\noindent\textbf{Case I: $R$ is even.} In this case we have $\beta =\delta +\delta\beta >\delta$, where
\begin{equation*}
    \beta=[0;\overline{R}]^-\,\ \text{and}\,\ \delta=\frac{1}{R+1-\beta}=[0;R+1,\overline{R}]^-.
\end{equation*}

If $a_k$ is odd and $t_k=1$, then $d_k^-\leq \bar\alpha_k$, $d_k^+\leq 0$, and 
\begin{equation*}
    s_3(k)\geq\frac{(1-2\bar\alpha_k)(1-\alpha_k)}{4(1-\bar\alpha_k\alpha_k)}\geq \frac{(1-2\delta)(1-\beta)}{4(1-\delta\beta)},
\end{equation*}
where the last inequality follows from the Lemma \ref{calculus}, since $\bar\alpha_k\leq \delta, \alpha_k\leq \beta$. 
As observed above this also covers the case $a_{k+1}$ odd with $t_{k+1}=1$. 

If $a_{k}$ is odd and $t_k=-1$ with  $a_{k+1}$ even (likewise $a_{k+1}$ odd, $t_{k+1}=-1$ with $a_k$ even)
we have  $d_k^-\leq 0$, $d_k^+\leq\alpha_k\alpha_{k+1}\leq \alpha_k\beta$ and  Lemma \ref{calculus} with $\alpha_k,\bar\alpha_k\leq \beta$ gives
\be \label{leftover}   s_3(k)\geq \frac{(1-\bar\alpha_k)(1-(1+\beta)\alpha_k)}{4(1-\bar\alpha_k\alpha_k)}\geq\frac{(1-\beta)(1-\beta-\beta^2)}{4(1-\beta^2)}\geq \frac{(1-\beta)(1-2\delta)}{4(1-\delta\beta)},
\ee
since $\delta<\beta$ and $\beta+\beta^2<2\delta$ (equivalently $R\geq 2+2\beta$).

This just leaves the  case  that  $a_k$ and $a_{k+1}$ both are even. If $d_k^-\leq 0$ and $d_k^+\geq 0$ 
(likewise $d_k^-\geq 0$ and $d_k^+\leq 0$) then $d_k^+\leq\alpha_k\alpha_{k+1}\leq \alpha_k \beta$ and again we have \eqref{leftover}.

\vskip0.1in
\noindent\textbf{Case II: R is odd.} In this case we have $\delta=\beta +\delta \beta> \beta$, where
\begin{equation*}
    \beta=[0; \overline{R+1}]^- \,\ \text{and}\,\ \delta=\frac{1}{R-\beta}=[0;R, \overline{R+1}]^-. 
\end{equation*}
We first establish some lemmas. 
Assume in both that $R\geq 3$ is odd and $\gamma=\gamma^*$.

\begin{lemma}\label{oddlemma1}
Suppose that  $\theta<1$. If $a_{k+1}$ is odd and $t_{k+1}=1$, then
\be \label{A>}
    \frac{1-\alpha_k-d_k^+}{1-\theta\alpha_k}\geq\frac{1-2\delta}{1-\theta\delta}.
\ee
Likewise, if $a_k$ is odd and $t_k=1$, then
\be \label{A><}
    \frac{1-\bar\alpha_k-d_k^-}{1-\theta\bar\alpha_k}\geq\frac{1-2\delta}{1-\theta\delta}.
\ee
\end{lemma}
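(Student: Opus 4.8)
The plan is to turn \eqref{A>} into a single inequality about the forward tail of the expansion. Since $t_{k+1}=1$ we have $d_k^+=\alpha_k(t_{k+1}+d_{k+1}^+)=\alpha_k(1+d_{k+1}^+)$, and since $\alpha_k^{-1}=a_{k+1}-\alpha_{k+1}$ this gives
\[
\frac{1-\alpha_k-d_k^+}{1-\theta\alpha_k}=\frac{(a_{k+1}-\alpha_{k+1}-d_{k+1}^+)-2}{(a_{k+1}-\alpha_{k+1})-\theta}.
\]
Because $t_{k+1}=1$ forces the first nonzero $t_i$ with $i>k+1$ to be $-1$, we have $d_{k+1}^+\le 0$; hence, enlarging the denominator by $-d_{k+1}^+\ge 0$, this quotient is at least $h\bigl(a_{k+1}-\alpha_{k+1}-d_{k+1}^+\bigr)$ with $h(x)=(x-2)/(x-\theta)$, which is increasing for $x>\theta$ (here $\theta<1<2$, and the numerator $a_{k+1}-\alpha_{k+1}-2-d_{k+1}^+>R-3\ge 0$ keeps everything positive). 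Since $\delta^{-1}=R-\beta$ gives $(1-2\delta)/(1-\theta\delta)=h(R-\beta)$, the lemma reduces to
\[
a_{k+1}-\alpha_{k+1}-d_{k+1}^+\ \ge\ R-\beta .
\]

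As $a_{k+1}$ is odd, $a_{k+1}\ge R$, and $R$ is odd, either $a_{k+1}\ge R+2$ — and then the left side exceeds $(R+2)-1-0=R+1>R-\beta$ — or $a_{k+1}=R$, in which case the required bound is exactly $\alpha_{k+1}+d_{k+1}^+\le\beta$. This is the crux. If $\alpha_{k+1}\le\beta$ it is immediate from $d_{k+1}^+\le 0$. If instead $\alpha_{k+1}>\beta$, I would first extract the rigidity this forces: letting $r\ge 1$ be least with $t_{k+1+r}\ne 0$ (so $t_{k+1+r}=-1$ and $a_{k+2},\dots,a_{k+r}$ are even), the relations $\alpha_{k+i}=1/(a_{k+i+1}-\alpha_{k+i+1})$, $\alpha_{k+i+1}<1$ and $\alpha_{k+1}>\beta=1/(R+1-\beta)$ propagate to give $a_{k+2}=\cdots=a_{k+r}=R+1$, $a_{k+r+1}=R$, and $\alpha_{k+i}>\beta$ for every $1\le i\le r$ (the block is finite, since otherwise $\alpha_{k+1}=[0;\overline{R+1}]^-=\beta$).

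On this block I would run a downward induction on $i=r,r-1,\dots,1$ for $x_i:=\alpha_{k+i}+d_{k+i}^+$. The base case uses $t_{k+r+1}=-1$: $x_r=\alpha_{k+r}d_{k+r+1}^+\le\alpha_{k+r}\alpha_{k+r+1}=\alpha_{k+r+1}/(R-\alpha_{k+r+1})<1/(R^2-R-1)\le 1/(R+1)<\beta$, valid for $R\ge 3$ using $\alpha_{k+r+1}<1/(R-1)$ and $\beta>1/(R+1)$. For $1\le i\le r-1$ the vanishing $t_{k+i+1}=0$ gives $d_{k+i}^+=\alpha_{k+i}d_{k+i+1}^+$ and $a_{k+i+1}=R+1$, hence
\[
x_i=\frac{1-\alpha_{k+i+1}+x_{i+1}}{R+1-\alpha_{k+i+1}};
\]
substituting $x_{i+1}<\beta$ and using $\beta(R+1-\beta)=1$ (equivalently $\delta(1-\beta)=\beta$), the inequality $x_i\le\beta$ collapses to $\alpha_{k+i+1}\ge\beta$, which holds. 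Thus $x_1=\alpha_{k+1}+d_{k+1}^+\le\beta$, closing the argument. The companion inequality \eqref{A><} then follows from \eqref{A>} by the order-reversing symmetry $(a_{k-i},t_{k-i})\leftrightarrow(a_{k+1+i},t_{k+1+i})$ noted in the proof of Theorem \ref{mymaintheorem}, which interchanges $\alpha_k$ with $\bar\alpha_k$ and $d_k^+$ with $d_k^-$. The one genuine difficulty is the case $a_{k+1}=R$, $\alpha_{k+1}>\beta$: one must recognise that $\alpha_{k+1}$ cannot exceed $\beta$ in isolation but only together with a run of $R+1$'s terminated by an $R$, and that the defining fixed-point equation of $\beta$ is precisely what makes the induction go through.
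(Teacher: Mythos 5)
Your argument is correct, and it reaches the conclusion by a genuinely different mechanism from the paper, although both proofs turn on the same structural observation. The paper normalises to $k=0$, splits on whether $\alpha\le\delta$ (easy, via Lemma \ref{calculus} and $d_0^+\le\alpha$) or $\alpha>\delta$, and in the hard case notes that $\alpha>\delta$ forces the block $a_1=R$, $a_2=\cdots=a_n=R+1$, $a_{n+1}=R$; it then bounds $d_0^+\le\alpha-\frac12\alpha\alpha_1\cdots\alpha_n$ and closes by comparing $\alpha$ with the perturbed continued fraction $\nu=[0;a_1,\ldots,a_n,a_{n+1}+2,a_{n+2},\ldots]^-<\delta$, computing $\alpha-\nu$ exactly from $p_{n+1}q_n-p_nq_{n+1}=1$ and checking $q_{n+1}+(2-\alpha_{n+1})q_n\ge 8$. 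You instead peel off one level of the expansion, reducing \eqref{A>} to $a_{k+1}-\alpha_{k+1}-d_{k+1}^+\ge R-\beta=\delta^{-1}$ via the monotone function $h(x)=(x-2)/(x-\theta)$, and in the critical case $a_{k+1}=R$, $\alpha_{k+1}>\beta$ you identify the same forced block of $(R+1)$'s terminated by an $R$ but close with a downward induction on $x_i=\alpha_{k+i}+d_{k+i}^+$, the inductive step collapsing to $\alpha_{k+i+1}\ge\beta$ through the fixed-point relation $\beta(R+1-\beta)=1$. Your base case ($x_r\le\alpha_{k+r}\alpha_{k+r+1}<1/(R^2-R-1)<\beta$), the propagation of $\alpha_{k+i}>\beta$, the degenerate case $r=1$, the positivity checks at $R=3$, and the symmetry reduction for \eqref{A><} all hold up. As for what each route buys: the paper's convergent computation is shorter once the determinant identity is in hand; your version is more elementary, makes transparent why $\beta$ is exactly the right threshold, and, by isolating $\theta$ inside $h$ at the outset, works verbatim for every $\theta<1$, whereas the paper's intermediate inequality $\frac{\cdot}{1-\theta\alpha}>\frac{\cdot}{1-\theta\delta}$ in the case $\alpha>\delta$ tacitly uses $\theta\ge 0$ (harmless in context, since the lemma is only applied with nonnegative $\theta$).
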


\begin{proof}
Notice that it suffices to show the inequality $(\ref{A>})$ when $k=0$. That is 
\begin{align*}
    A:=\frac{1-\alpha-d_0^+}{1-\theta\alpha}\geq\frac{1-2\delta}{1-\theta\delta},
\end{align*}
where $\alpha_0=\alpha=[0;a_1,a_2,\cdots]^-$, and $d_0^+=t_1\alpha+t_2\alpha\alpha_1+\cdots$.

If  $\alpha\leq \delta $ (for example  the case when the $a_i$, $i\geq 2,$ are all even), then  
\be \label{easy}
    A\geq\frac{1-2\alpha}{1-\theta\alpha}\geq\frac{1-2\delta}{1-\theta\delta},
\ee
from Lemma \ref{calculus}.

So suppose that  $\alpha>\delta$ and  let $a_{n+1}$, $n\geq1$, be the odd partial quotient such that  $a_i$ is even for all $1<i<n+1$. Notice we must have $a_1=a_{n+1}=R$ and $a_i=R+1$ for $1<i<n+1$, else $\alpha <\delta$. Since $t_1=1$ and $t_{n+1}=-1$,
\begin{align*}
    d_0^+\leq \alpha-\alpha\alpha_{1}\cdots\alpha_{n}+\alpha\alpha_{1}\cdots\alpha_{n}\alpha_{n+1}\leq\alpha-\frac{1}{2}\alpha\alpha_{1}\cdots\alpha_{n},
\end{align*}
and
\begin{equation}\label{A}
    A\geq\frac{1-2\alpha+\frac{1}{2}\alpha\alpha_1\cdots\alpha_n}{1-\theta\alpha} >\frac{1-2\alpha+\frac{1}{2}\alpha\alpha_1\cdots\alpha_n}{1-\theta\delta}.
\end{equation}
Setting  $\nu:=[0;a_1, a_2,\ldots,a_n,a_{n+1}+2,a_{n+2},\ldots]^-$ we have $\nu<\delta, $
and  we just need to show that
\be \label{key}  \alpha -\nu \leq  \frac{1}{4} \alpha\alpha_1\cdots\alpha_n, \ee
to obtain $1-2\alpha+\frac{1}{2}\alpha\alpha_1\cdots\alpha_n\geq 1-2\nu\geq 1-2\delta$ and  \eqref{A>}.

Recall that 
\begin{equation}\label{alpha}
\alpha=\frac{p_{n+1}-p_{n}\alpha_{n+1}}{q_{n+1}-q_{n}\alpha_{n+1}}=\frac{p_{n}-p_{n-1}\alpha_{n}}{q_{n}-q_{n-1}\alpha_{n}}.
\end{equation}
Similarly
$$    \nu=\frac{p_{n+1}-p_n\alpha_{n+1}+2p_n}{q_{n+1}-q_n\alpha_{n+1}+2q_n}, $$
and $p_{n+1}q_n-p_n q_{n+1}=1$ gives 
$$\alpha -\nu = \frac{2}{(q_{n+1}-q_{n}\alpha_{n+1})(q_{n+1}-q_n\alpha_{n+1}+2q_n)}= \frac{2\alpha\alpha_1\cdots\alpha_n}{(q_{n+1}+(2-\alpha_{n+1})q_n)},$$
and \eqref{key} just needs $q_{n+1}+(2-\alpha_{n+1})q_n \geq 8$. Plainly $q_{n+1}\geq 3\cdot 3 -1=8.$
 \end{proof}

 \begin{lemma}\label{oddlemma}
 Suppose that $\theta<1$. If  $a_{k+1}$ is even and  $d_{k}^+\leq 0$, then 
 \begin{equation*}
    \frac{1-\alpha_k-d_k^+}{1-\theta\alpha_k}\geq\frac{1-\beta}{1-\theta\beta}.
 \end{equation*}
 \end{lemma}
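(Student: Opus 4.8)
The plan is to mimic the perturbation argument of Lemma~\ref{oddlemma1}. Since $\alpha_k$ and $d_k^+$ depend only on $a_{k+1},a_{k+2},\dots$ and the corresponding $t_{k+1},t_{k+2},\dots$, it suffices to prove the inequality when $k=0$; thus $a_1$ is even --- hence $a_1\geq R+1$, as $R$ is odd and every $a_i\geq R$ --- and $d_0^+\leq 0$, and I must show $(1-\alpha-d_0^+)/(1-\theta\alpha)\geq(1-\beta)/(1-\theta\beta)$ with $\alpha=\alpha_0$.

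First I would dispose of the case $\alpha\leq\beta$ (this also covers the case that every $a_i$, $i\geq1$, is even, since then $\alpha=[0;a_1,a_2,\dots]^-\leq[0;\overline{R+1}]^-=\beta$ and $d_0^+=0$): here $d_0^+\leq 0$ gives $1-\alpha-d_0^+\geq 1-\alpha$, and since $z\mapsto(1-z)/(1-\theta z)$ is decreasing on $[0,1)$ for $\theta<1$ (cf.\ Lemma~\ref{calculus}) we get $(1-\alpha)/(1-\theta\alpha)\geq(1-\beta)/(1-\theta\beta)$. So assume $\alpha>\beta$. Then some $a_i$, $i\geq 2$, is odd; let $a_j$, $j\geq 2$, be the first such, so $a_1,\dots,a_{j-1}$ are even ($\geq R+1$) and $a_j$ is odd ($\geq R$). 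Since $t_1=\dots=t_{j-1}=0$ we have $d_0^+=\alpha\alpha_1\cdots\alpha_{j-1}(t_j+d_j^+)$ with $|d_j^+|\leq\alpha_j<1$, so $d_0^+\leq 0$ forces $t_j=-1$, and hence $-d_0^+=\alpha\alpha_1\cdots\alpha_{j-1}(1-d_j^+)\geq\alpha\alpha_1\cdots\alpha_{j-1}(1-\alpha_j)$.

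Next I would bring in the perturbation $\nu:=[0;a_1,\dots,a_{j-1},a_j+2,a_{j+1},a_{j+2},\dots]^-$. Its tail from position $j$ is $1/(a_j+2-\alpha_j)<1/(R+1)<\beta$ (using $a_j+2\geq R+2$ and $\alpha_j<1$), whence, by a short induction on length using $a_1,\dots,a_{j-1}\geq R+1$ together with $\beta=1/(R+1-\beta)$, we get $\nu<\beta$. Exactly as in Lemma~\ref{oddlemma1}, $p_jq_{j-1}-p_{j-1}q_j=1$ gives
\[
  \alpha-\nu=\frac{2\,\alpha\alpha_1\cdots\alpha_{j-1}}{q_j+(2-\alpha_j)q_{j-1}},
\]
so $-d_0^+\geq\alpha-\nu$ provided $(1-\alpha_j)\bigl(q_j+(2-\alpha_j)q_{j-1}\bigr)\geq 2$, and this is clear since $\alpha_j\leq 1/(R-1)\leq 1/2$ while $q_j\geq q_2=a_1a_2-1\geq 4\cdot 3-1=11$. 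Therefore $1-\alpha-d_0^+\geq 1-\alpha+(\alpha-\nu)=1-\nu>1-\beta$, and since $0\leq\theta<1$ and $\alpha>\beta$,
\[
  \frac{1-\alpha-d_0^+}{1-\theta\alpha}\ \geq\ \frac{1-\beta}{1-\theta\alpha}\ \geq\ \frac{1-\beta}{1-\theta\beta},
\]
as required.

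The only step needing genuine care is the case $\alpha>\beta$. Unlike in the even-$R$ situation, here $\alpha_k$ can really exceed $\beta$ --- for instance when $a_{k+1}=R+1$ is followed by a long block of $R$'s --- so one is forced to use $d_k^+<0$ and to argue that its size is enough to make up the difference. The ``bump $a_j$ by $2$'' comparison does precisely this, and it is the sub-step $\nu<\beta$ that makes the gain land at the value $1-\beta$; everything else is bookkeeping parallel to Lemma~\ref{oddlemma1}.
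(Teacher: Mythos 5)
Your proof is correct and follows essentially the same route as the paper's: reduce to $k=0$, dispose of $\alpha\leq\beta$ via Lemma \ref{calculus}, and for $\alpha>\beta$ locate the first odd partial quotient $a_j$, bound $-d_0^+$ from below by a multiple of $\alpha\alpha_1\cdots\alpha_{j-1}$, and compare with $\nu=[0;a_1,\dots,a_j+2,\dots]^-<\beta$ using $p_jq_{j-1}-p_{j-1}q_j=1$ and a crude bound on $q_j$. Your factor $(1-\alpha_j)$ in place of the paper's $\tfrac12$ and your implicit use of $\theta\geq 0$ in the final denominator comparison are both consistent with the paper's own argument and its applications of the lemma.
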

 \begin{proof}
 We proceed as in the proof of Lemma \ref{oddlemma1}. Suppose $k=0$. Then we show  
 $$A:=\frac{1-\alpha-d_0^+}{1-\theta\alpha}\geq\frac{1-\beta}{1-\theta\beta}.$$
If $\alpha \leq \beta$ then
\be \label{key2}
    A\geq\frac{1-\alpha}{1-\theta\alpha}\geq\frac{1-\beta}{1-\theta\beta}.
\ee
Assume $\alpha >\beta,$ and  let $a_{n+1}, n\geq 1$, be the odd partial quotient such that $a_i$ is even for all $1\leq i\leq n$. Then, since  $t_{1}, t_2,\ldots, t_n=0$ and $t_{n+1}=-1$,
 \begin{align*}
    d_0^+\leq-\alpha\alpha_{1}\cdots\alpha_{n}+\alpha\alpha_{1}\cdots\alpha_{n}\alpha_{n+1}\leq-\frac{1}{2}\alpha\alpha_{1}\cdots\alpha_{n},
\end{align*}
and 
\begin{align*}
    A\geq\frac{1-\alpha+\frac{1}{2}\alpha\alpha_1\cdots\alpha_n}{1-\theta\beta}.
\end{align*}
Set $\nu:=[0;a_1,a_2,\ldots,a_n,a_{n+1}+2,a_{n+2},\ldots]^-<\beta$. This time we just  need to show $\alpha -\nu \leq \frac{1}{2} \alpha\alpha_{1}\cdots\alpha_{n}$, which reduces to $q_{n+1}+(2-\alpha_{n+1})q_n \geq 4$. Plainly  $q_{n+1}\geq 3\cdot 4 -1=11$.
 \end{proof}

\begin{proof}[Proof of Theorem \ref{mymaintheorem} when R is odd] 
We set $\sigma:=[0;\overline{R}]^-$. 

We need to show that $s_3(k)\geq C(R)$. If $a_k$ and $a_{k+1}$ both are odd, then without loss of generality we can assume $t_k=-1$ and $t_{k+1}=1$. Plainly $d_k^-\leq-\bar\alpha_k+\bar \alpha_k\bar\alpha_{k-1}\leq -\bar\alpha_k+\bar\alpha_k\sigma$, and by Lemma \ref{oddlemma1} and Lemma \ref{calculus} (using $\sigma>\delta$ and $\bar\alpha_k\leq \sigma$) and $\sigma>\beta$
$$  s_3(k)\geq\frac{(1-2\delta)(1-\bar\alpha_k \sigma)}{4(1-\bar\alpha_k\delta)}\geq\frac{(1-2\delta)(1- \sigma^2)}{4(1-\sigma \delta)}> \frac{(1-2\delta)(1- \sigma^2)}{4(1-\beta \delta)}>C(R),$$
since $\sigma^2 <\frac{1}{2}\sigma < \beta$.

Now it suffices to consider the following three cases: (i). $a_k$ and $a_{k+1}$ both are even, (ii). $(a_k, t_k)=(odd, -1)$ and $a_{k+1}$ is even, (iii), $(a_k, t_k)=(odd, 1)$ and $a_{k+1}$ is even.
For (i) and (ii), it can be readily seen that
\begin{equation*}
    s_3(k)\geq\frac{1}{4}(1-\sigma)(1-\sigma-\sigma^2)=\frac{1}{4}(1-2\sigma+\sigma^3)\geq \frac{1}{4}(1-2\delta)\geq C(R),
\end{equation*}
using that $\sigma-\delta = (\sigma-\beta)\sigma \delta =(1-\beta+\sigma) \sigma^2\delta\beta < \frac{3}{2}\beta \sigma^3 <\frac{1}{2}\sigma^3.$
For (iii), we apply Lemmas \ref{oddlemma1} and \ref{oddlemma}
\begin{align*}
    s_3(k)&=\frac{(1-\bar\alpha_k-d_k^-)(1-\alpha_k-d_k^+)}{4(1-\bar\alpha_k\alpha_k)}
    \geq  \frac{(1-2\delta)(1-\alpha_k-d_k^+)}{4(1-\delta\alpha_k)}
    \geq \frac{(1-2\delta)(1-\beta)}{4(1-\delta\beta)}. \qedhere
\end{align*}
\end{proof}

It remains just to demonstrate the asymptotics \eqref{AsympEven} and \eqref{AsympOdd}.
For $R\geq 4$ even, one can use $R\beta=1+\beta^2$ to write
$$ E_1(R) = \frac{ 11-2\beta(6-3\beta+\beta^2)}{1+(1-2\beta)/R}=11+O\left(\frac{1}{R}\right), $$
with $E_1(4)=(524-256\sqrt{3})/11=7.3268\ldots$, $E_1(R)\nearrow 11,$ and \eqref{AsympEven} is clear.

For $R\geq 3$ odd, one can use $R\beta=1-\beta+\beta^2$ to write
$$ E_2(R) = \frac{ 10-2\beta(11-7\beta+2\beta^2)}{1-2\beta/R}=10+O\left( \frac{1}{R}\right), $$
with $E_2(3)=(348-162\sqrt{3})/11=6.1279\ldots$, $E_2(R)\nearrow 10,$ and \eqref{AsympOdd} is clear.
\end{proof}

\section{Proof Of Theorem \ref{myexample}}
We assume that  $\alpha$ has expansion  \eqref{ContExpan} of period $R+1, (R,)^l$ with $R\geq 4$ even.

Suppose first that $\gamma$ has an expansion \eqref{generalgamma} with $t_i=0$ when $a_i=R$  and $t_i=\pm 1$
when $a_i=R+1,$ for all sufficiently large $i$. If $a_k=R+1$ and $t_k=1$ then 
$$\bar\alpha_k\rightarrow\delta,\; \alpha_k\rightarrow \beta,\quad  d_k^-\rightarrow\delta,\,\ d_k^+\rightarrow0, \quad 
    s_3(k)\rightarrow C(R),\quad \text{ as } k,l\rightarrow\infty. $$
Likewise, if  $a_k=R+1$ and $t_k=-1,$ then   $s_1(k)\rightarrow C(R)$ as $k,l\rightarrow \infty$.  Hence these $\gamma$ cannot contribute a value $M(\alpha,\gamma)$ strictly greater than $C(R)$ to $\rho(\alpha)$ as $l\rightarrow \infty$.
 By Theorem \ref{mymaintheorem} we have $M(\alpha,\gamma^*)\geq C(R)$ and hence
 \begin{align}\label{M(a,g*)}
    \lim_{l\rightarrow \infty}  M(\alpha,\gamma^*) =C (R).
 \end{align}

It remains to show that $ M(\alpha, \gamma)\leq C(R)$  as $l\rightarrow \infty$ for the remaining $\gamma\notin\mathbb{Z}+\alpha\mathbb{Z};$  that is, those $\gamma$ having an expansion \eqref{generalgamma} with  $|t_i|\geq 2$ infinitely often.

Observe that changing the signs of $t_i$ only interchanges $s_1(i)$ with $s_3(i)$ and $s_2(i)$ with $s_4(i)$. Thus, if we eliminate any block of $t_i$ from consideration, then the same will be true for its negative. Also, interchanging  $\bar\alpha_k$ with $\alpha_k$ and $d_k^-$ with $d_k^+$ does not change $s_1(k)$ and $s_3(k)$ (and interchanges $s_2(k),s_4(k)$). Hence, if we eliminate a block  of $t_{i}$ from consideration, then the same will be true for the reversed block of $t_i$ (on a reversed block of $a_i$).

If $t_k=a_k$ infinitely often, then from Lemma 1 of \cite{Pinner}
\begin{align}\label{s3s4}
   M(\alpha,\gamma)\leq \liminf_{\stackrel{k\rightarrow \infty}{t_k=a_k}}\frac{\bar\alpha_k}{4(1-\bar\alpha_k\alpha_k)},
 \end{align}
 and hence
 \begin{align*}
   M(\alpha,\gamma)\leq\frac{\beta}{4(1-\beta^2)}\leq\frac{(\beta+\beta^2-\delta\beta)}{4(1-\beta^2+\beta^2-\delta\beta)}< C(R),
\end{align*}
on observing that  $(1-2\delta)(1-\beta)=1-3\beta+4\delta\beta$ and $4\beta+\beta^2-5\delta \beta <1$ (equivalently $R>4-\delta(3-2\beta)$).

Thus, we may assume that $t_i= a_i$ at most  finitely many times. In this case, we notice that 
\begin{align*}
    \sqrt{s_3(k)s_4(k)}=\frac{\sqrt{\big((1-d_k^-)^2-\bar\alpha_k^2\big)\big(1-(\alpha_k+d_k^+)^2\big)}}{4(1-\bar\alpha_k\alpha_k)},
\end{align*}
and hence 
\begin{align}\label{1s3s4}
    \min\{s_3(k),s_4(k)\}  \leq\frac{1-d_k^-}{4(1-\bar\alpha_k\alpha_k)}.
\end{align}

We establish the following lemmas.
\begin{lemma}\label{consecutive} 
If the sequence $\{t_i\}_i$ in the expansion (\ref{generalgamma}) of $\gamma$ has infinitely many  blocks of the form  $t_{k}, t_{k+j}>0$, for some $j>0$,  with at least one of $a_k$, $a_{k+j}$ even, and $t_i=0$ for any $k<i<k+j$,  then $M(\alpha, \gamma)< C(R)$ as $l\rightarrow \infty$.
\end{lemma}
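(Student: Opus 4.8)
\textbf{Proof proposal for Lemma \ref{consecutive}.}

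The plan is to exploit the upper bound \eqref{1s3s4}, namely $\min\{s_3(k),s_4(k)\}\le (1-d_k^-)/(4(1-\bar\alpha_k\alpha_k))$, evaluated at the index $k+j$ sitting at the right end of one of the hypothesized blocks. The key point is that $t_k>0$ (with possibly some zeros in between) forces $d_{k+j}^-$ to be bounded \emph{below} by a positive quantity: since $d_{k+j}^-=t_{k+j}\bar\alpha_{k+j}+t_{k+j-1}\bar\alpha_{k+j}\bar\alpha_{k+j-1}+\cdots$ and the intermediate $t_i$ vanish while $t_{k+j},t_k\ge 1$, we get $d_{k+j}^-\ge \bar\alpha_{k+j}(\text{something})+ \bar\alpha_{k+j}\cdots\bar\alpha_k$, a strictly positive lower bound. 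Plugging $d_{k+j}^-\ge \epsilon(k,j)>0$ into \eqref{1s3s4} makes the numerator $1-d_{k+j}^-$ strictly smaller than $1$, and then I would bound $\bar\alpha_{k+j}$ and $\alpha_{k+j}$ by their limiting values ($\le\delta$ or $\le\beta$ according to whether $a_{k+j}=R+1$ or $R$) using Lemma \ref{calculus} to control the denominator, obtaining $M(\alpha,\gamma)\le \liminf (1-d_{k+j}^-)/(4(1-\bar\alpha_{k+j}\alpha_{k+j})) < C(R)$ in the limit $l\to\infty$.

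The first step is to reduce to the worst case for the \emph{shape} of the block. Using the two symmetries already noted in the text — negating all $t_i$ swaps $s_1\leftrightarrow s_3$, $s_2\leftrightarrow s_4$, and reversing a block swaps the roles of the $\bar\alpha$-side and $\alpha$-side — I may assume the even partial quotient among $a_k,a_{k+j}$ is $a_{k+j}$ itself (the right endpoint), and that $t_k,t_{k+j}>0$. Then I would write out $d_{k+j}^-$ explicitly: with $t_i=0$ for $k<i<k+j$ and $t_k,t_{k+j}\ge 1$, the tail contributes at least $\bar\alpha_{k+j}\bar\alpha_{k+j-1}\cdots\bar\alpha_{k+1}\bar\alpha_k\cdot t_k\ge \bar\alpha_{k+j}\cdots\bar\alpha_k$ from the $t_k$ term, plus $t_{k+j}\bar\alpha_{k+j}\ge \bar\alpha_{k+j}$ from the leading term; the remaining terms (from $t_i$ with $i<k$) can be negative but are bounded in absolute value by a geometric-type tail, and one must check they do not cancel the positive contribution. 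The cleanest route is to observe $d_{k+j}^-\ge \bar\alpha_{k+j} + \bar\alpha_{k+j}\cdots\bar\alpha_k\cdot(1 - \bar\alpha_{k-1}-\bar\alpha_{k-1}\bar\alpha_{k-2}-\cdots)\ge \bar\alpha_{k+j}+\tfrac12\bar\alpha_{k+j}\cdots\bar\alpha_k$ by the same ``telescoping of negatives'' estimate used in Lemmas \ref{oddlemma1} and \ref{oddlemma} (the negative tail is $\ge -(1-\bar\alpha_{k-1})\cdots > -1$, in fact bounded away from $-1$ since $R\ge 4$).

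Once $d_{k+j}^-\ge \bar\alpha_{k+j}+ \tfrac12\bar\alpha_{k+j}\cdots\bar\alpha_k$ is in hand, the finish is a computation. In the limit $l\to\infty$ the block of $R$'s between the two $(R+1)$'s (if any) is long, so $\bar\alpha_{k+j}\cdots\bar\alpha_k\to 0$ when $j$ is large; but the hypothesis only gives infinitely many such blocks, so I take a subsequence along which $j$ stays bounded by some fixed $j_0$ — there must be one, else all blocks have $t_i=0$ stretches growing without bound, which still forces $\bar\alpha_{k+j}\cdots\bar\alpha_k$ bounded below along a subsequence only if... — here I should be slightly careful: if $j$ is unbounded the product $\bar\alpha_{k+j}\cdots\bar\alpha_k$ may tend to $0$ and the argument via the product term collapses. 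The robust fix is to not rely on the product term at all but just on $d_{k+j}^-\ge \bar\alpha_{k+j}$ when $t_{k+j}\ge 1$ \emph{together with} the fact that $t_{k+j}\bar\alpha_{k+j}\ge \bar\alpha_{k+j}$ already beats $C(R)$: substituting $d_{k+j}^-\ge\bar\alpha_{k+j}$ and $\bar\alpha_{k+j}\alpha_{k+j}\le\delta\beta$ (with $\bar\alpha_{k+j}\to\delta$, $\alpha_{k+j}\to\beta$ along the relevant subsequence when $a_{k+j}=R+1$, or the $R$-analogue) gives $\min\{s_3,s_4\}\le (1-\bar\alpha_{k+j})/(4(1-\bar\alpha_{k+j}\alpha_{k+j}))\to (1-\delta)/(4(1-\delta\beta))$, and one checks $(1-\delta)/(4(1-\delta\beta)) < C(R)=(1-2\delta)(1-\beta)/(4(1-\delta\beta))$? — that is false ($1-\delta>1-2\delta$ and $1-\delta > (1-2\delta)(1-\beta)$), so the bare bound $d_{k+j}^-\ge\bar\alpha_{k+j}$ does \emph{not} suffice. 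So the product term genuinely matters, and the main obstacle is precisely controlling it: I must split into the case $j\le j_0$ bounded (product term bounded below, gives a strict gap that survives the limit) and the case $j\to\infty$, in which I instead re-anchor the estimate at the index $k$ on the \emph{left} end and use $d_k^+\ge \bar\alpha$-free positive tail, or more simply use that then the two $t>0$ entries are far apart and I can treat them as two \emph{separate} positive isolated entries each of which — by a companion computation analogous to the one in the proof of Theorem \ref{myexample} already carried out for $t_k=a_k$ — forces a value below $C(R)$. I expect the bounded-$j$ case to be routine once the explicit lower bound for $d_{k+j}^-$ is written down, and the unbounded-$j$ case to be the one requiring the extra idea of localizing near a single positive $t_i$ surrounded by zeros.
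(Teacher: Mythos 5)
There is a genuine gap: your central tool, inequality \eqref{1s3s4}, is quantitatively too weak for this lemma, and the rescue you sketch cannot close the deficit. The best that \eqref{1s3s4} can give is $M(\alpha,\gamma)\le (1-d_{k+j}^-)/(4(1-\bar\alpha_{k+j}\alpha_{k+j}))$, and in the situation of the lemma $d_{k+j}^-$ is only of size $\bar\alpha_{k+j}+O(R^{-2})$ (even $2\bar\alpha_{k+j}+O(R^{-2})$ if you also use parity, see below), i.e.\ at most about $2/R$; meanwhile $4C(R)(1-\delta\beta)=(1-2\delta)(1-\beta)=1-3/R+O(R^{-2})$. The product term $\bar\alpha_{k+j}\cdots\bar\alpha_k$ that you propose to track is $O(R^{-2})$ already for $j=1$, so your ``bounded $j$'' case is not routine --- it fails by a margin of order $1/R$, not just in the unbounded case. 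Concretely for $R=4$: $1-\delta\approx 0.79$ and even $1-2\bar\alpha_{k+j}\approx 0.47$, versus $(1-2\delta)(1-\beta)\approx 0.42$. You correctly notice that the bare bound $d^-\ge\bar\alpha$ fails, but the right conclusion is that \eqref{1s3s4} is the wrong tool here (it is the right tool for Lemma \ref{>2}, where $t_k\ge 4$ forces $d_k^-\ge 3\bar\alpha_k\approx 3/R$), not that the product term needs more care.

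Two ideas are missing. First, parity: since $t_i$ and $a_i$ have the same parity, the hypothesis that one of $a_k,a_{k+j}$ equals $R$ (even) with $t>0$ there forces that $t$ to be at least $2$, not $1$. Second, and decisively, you must retain \emph{both} factors of $s_3$. The paper normalizes by reversal so that the even index is the \emph{left} end $k$ of the block; then $t_k\ge 2$ gives $d_k^-\ge 2\bar\alpha_k+\bar\alpha_k d_{k-1}^->\bar\alpha_k$ by \eqref{drange}, while --- this is exactly the information your route discards --- $d_k^+\ge 0$ because the next nonzero $t$ to the right is $t_{k+j}>0$. Hence $s_3(k)\le (1-2\bar\alpha_k)(1-\alpha_k)/(4(1-\bar\alpha_k\alpha_k))$, and since $a_k=R$ forces $\bar\alpha_k\ge\theta/(1-\theta)$ with $\theta=[0;\overline{R+1}]^-$ (so $\bar\alpha_k$ is of order $\beta$, the larger of the two limits) while $\alpha_k\ge\delta$ as $l\to\infty$, Lemma \ref{calculus} yields $s_3(k)\le(1-3\theta)/(4(1-\delta\beta))<C(R)$; for $R=4$ this is $\approx 0.37<0.42$ in the numerators. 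Note also that anchoring at the right end $k+j$, as you do, gives no control on $d_{k+j}^+$ (nothing is assumed about $t_i$ for $i>k+j$), which is presumably why you reached for \eqref{1s3s4} in the first place; anchoring at the left end of the block is what makes both $d_k^-$ and $d_k^+$ simultaneously usable.
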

\begin{proof}
Without loss of generality suppose that $a_{k}$ is even. Then $a_{k}=R$, $t_k\geq2$, and with $\theta=[0;\overline{R+1}]^-$ we have 
\begin{align*}
    \bar\alpha_k\geq\frac{1}{R-\theta}=\frac{\theta}{1-\theta}, \quad \quad \alpha_k\geq \delta \text{ as } l\rightarrow \infty.
\end{align*}
Plainly  $d_{k}^+\geq0$, while $d_k^-\geq 2\bar\alpha_k + \bar\alpha_k d_{k-1}^->\bar\alpha_k$ by \eqref{drange}, and we get
\begin{align*}
    s_3(k)\leq\frac{(1-2\bar\alpha_k)(1-\alpha_k)}{4(1-\bar\alpha_k\alpha_k)}
    \leq \frac{\left(1-\cfrac{2\theta}{1-\theta}\right)(1-\delta)}{4(1-\delta\beta)}
    \leq\frac{1-3\theta}{4(1-\delta\beta)} <C(R),
\end{align*}
the second inequality from Lemma \ref{calculus} and $\theta/(1-\theta)<\beta$, the third from $\delta>\theta$, and the last since $1-3\theta=1-3\delta+3\delta \theta (\beta-\theta)$ while
$(1-2\delta)(1-\beta)=1-3\delta+\delta\beta$.
\end{proof}
We can now assume that the sequence $\{t_i\}_i$ in the expansion (\ref{generalgamma}) eventually does not contain any block (or its negative) of the type excluded by Lemma $\ref{consecutive}$.
\begin{lemma}\label{>2}
If $\gamma$ has infinitely many $t_k\geq3$, then $M(\alpha,\gamma)<C(R)$.
\end{lemma}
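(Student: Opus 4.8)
\textbf{Proof proposal for Lemma \ref{>2}.}

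The plan is to mimic the structure of Lemma \ref{consecutive}, but exploit the larger value of $t_k$ more aggressively. Suppose $t_k\geq 3$ for infinitely many $k$. As in the earlier arguments, I may assume $a_i=R$ or $a_i=R+1$ for all large $i$ (since we are computing $\liminf$ as $l\to\infty$ along the periodic expansion of period $R+1,(R,)^l$, the relevant $\bar\alpha_k,\alpha_k$ all lie in the interval $[\,\theta/(1-\theta),\,\beta\,]$ with $\theta=[0;\overline{R+1}]^-$). Since $-(a_k-2)\leq t_k\leq a_k$ and $t_k$ has the parity of $a_k$, having $t_k\geq 3$ forces $a_k=R+1$ (as $R$ is even), so $\bar\alpha_k\to\delta$ and $\alpha_k\to\beta$ as $l\to\infty$. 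The key point is that $t_k\geq 3$ makes $d_k^-$ large: from the definition $d_k^-=t_k\bar\alpha_k+\bar\alpha_k d_{k-1}^-$ and the bound $d_{k-1}^-\geq -(1-\bar\alpha_{k-1})\geq -1$ from \eqref{drange}, we get $d_k^-\geq (t_k-1)\bar\alpha_k\geq 2\bar\alpha_k$.

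Next I would feed this into \eqref{1s3s4}: since $t_i=a_i$ only finitely often, $\min\{s_3(k),s_4(k)\}\leq (1-d_k^-)/\bigl(4(1-\bar\alpha_k\alpha_k)\bigr)$. Plugging in $d_k^-\geq 2\bar\alpha_k$ and letting $l\to\infty$ so that $\bar\alpha_k\to\delta$, $\alpha_k\to\beta$,
\begin{align*}
    \min\{s_3(k),s_4(k)\}\leq\frac{1-2\bar\alpha_k}{4(1-\bar\alpha_k\alpha_k)}\longrightarrow \frac{1-2\delta}{4(1-\delta\beta)}.
\end{align*}
So it remains to check the purely algebraic inequality $(1-2\delta)/(1-\delta\beta) < (1-2\delta)(1-\beta)/(1-\delta\beta)=4C(R)$, i.e.\ $1-2\delta < (1-2\delta)(1-\beta)$ — but that is false, since $1-\beta<1$. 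Hence the crude bound $d_k^-\geq 2\bar\alpha_k$ is not quite enough; I need a little more. The remedy is to keep one more term, using $d_{k-1}^->-(1-\bar\alpha_{k-1})\geq -(1-\theta/(1-\theta))$, so $d_k^-\geq t_k\bar\alpha_k - \bar\alpha_k(1-\tfrac{\theta}{1-\theta})\bar\alpha_{k-1}\geq 3\bar\alpha_k - \bar\alpha_k\beta^2$ when $t_k\geq 3$; letting $l\to\infty$ gives $d_k^-\geq \delta(3-\beta^2)$, whence
\begin{align*}
    \min\{s_3(k),s_4(k)\}\leq \frac{1-\delta(3-\beta^2)}{4(1-\delta\beta)},
\end{align*}
and I reduce to showing $1-\delta(3-\beta^2) < (1-2\delta)(1-\beta) = 1-\delta(3-2\beta)$ — wait, this needs $\beta^2>2\beta$, again false. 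So the right move is instead to use the reversal/symmetry: since $d_k^+$ and $d_k^-$ both enter, and by the block-exclusion from Lemma \ref{consecutive} the neighbours of $a_k$ impose sign constraints on $d_k^+$, I expect $s_3(k)$ itself (not just the harmonic-mean bound) to be controlled by combining $d_k^-\geq 2\bar\alpha_k$ with $d_k^+\geq 0$ or with an upper bound on $d_k^+$ coming from the structure after Lemma \ref{consecutive}. Concretely, with $d_k^-\geq 2\bar\alpha_k$ and $d_k^+\leq 0$ (the sign forced by the preceding reductions, since a positive $t$ on either side of the $t_k\geq 3$ block near an even $a$ is excluded),
\begin{align*}
    s_3(k)=\frac{(1-\bar\alpha_k-d_k^-)(1-\alpha_k-d_k^+)}{4(1-\bar\alpha_k\alpha_k)}\leq\frac{(1-3\bar\alpha_k)(1-\alpha_k)}{4(1-\bar\alpha_k\alpha_k)}\longrightarrow\frac{(1-3\delta)(1-\beta)}{4(1-\delta\beta)}<C(R),
\end{align*}
the last step immediate since $1-3\delta<1-2\delta$.

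The main obstacle I anticipate is precisely this sign-bookkeeping: after Lemma \ref{consecutive} we know there is no block $t_k,t_{k+j}>0$ straddling an even partial quotient with zeros in between, but to pin down the sign of $d_k^+$ (and $d_k^-$) near a $t_k\geq 3$ I must carefully trace which configurations of the $t_i$'s remain possible, handle separately the subcase where $a_{k\pm 1}=R$ versus $R+1$, and make sure the $l\to\infty$ limits of $\bar\alpha_k$ and $\alpha_k$ land on the right endpoints; the algebra comparing $(1-3\delta)(1-\beta)$ (or the relevant variant) with $4C(R)=(1-2\delta)(1-\beta)$ and with the upper bound \eqref{upperbound} is then routine, using $\delta=\beta+\delta\beta$ is false here (that was Case I) — rather, for even $R$, $\beta=\delta+\delta\beta$, which gives $1-2\delta = 1-2\beta/(1+\beta)<1-\beta$ only for $\beta<1$, confirming all the needed strict inequalities.
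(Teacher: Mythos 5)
Your overall strategy---use $t_k\geq 3$ to force $d_k^-$ to be large and feed this into \eqref{1s3s4} or directly into $s_3(k)$---is the paper's strategy too, and your observation that the crude bound $d_k^-\geq 2\bar\alpha_k$ in \eqref{1s3s4} falls just short is exactly the real difficulty. But the proposal has two genuine gaps. First, $t_k\geq 3$ does \emph{not} force $a_k=R+1$: if $a_k=R$ (even) then parity gives $t_k\geq 4$, which is possible for $R\geq 6$ (for $R=4$ it is only excluded because $t_k=a_k$ has already been ruled out). The paper treats this case first, via \eqref{1s3s4} with $d_k^-\geq 4\bar\alpha_k+\bar\alpha_k d_{k-1}^-\geq 3\bar\alpha_k$, giving the limit $(1-3\delta)/(4(1-\delta\beta))<C(R)$; crucially, the resulting reduction ``$|t_i|\leq 2$ whenever $a_i=R$'' is then \emph{used} in the remaining case, so you cannot simply skip it.

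Second, in the case $a_k=R+1$, $t_k\geq 3$, your final display is wrong in direction: to bound $s_3(k)$ from above you need the factor $1-\alpha_k-d_k^+$ bounded above, i.e.\ a \emph{lower} bound on $d_k^+$, whereas you invoke $d_k^+\leq 0$, which gives $1-\alpha_k-d_k^+\geq 1-\alpha_k$ and hence nothing (and $d_k^+\geq 0$ is false in general, e.g.\ for a block $t_k,t_{k+1}=3,-2$, which Lemma~\ref{consecutive} does not exclude). Nor is the fix a mere sign flip: with the correct lower bound $d_k^+\geq -2\alpha_k$ (which itself needs $|t_{k+1}|\leq 2$ from the first case) and only $d_k^-\geq 2\bar\alpha_k$, the limit is $(1-3\delta)(1+\beta)/(4(1-\delta\beta))$, and since $\delta(1+\beta)=\beta$ this equals $(1-2\beta)/(4(1-\delta\beta))$, which is \emph{larger} than $C(R)$ for every even $R\geq 4$ (it would be smaller only if $\beta>1/3$). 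One needs the sharper bound $d_k^-\geq 3\bar\alpha_k-2\bar\alpha_k\bar\alpha_{k-1}$, exploiting $t_{k-1}\geq -2$ and \eqref{drange}, which yields $(1-4\delta+2\delta\beta)(1+\beta)=1-2\delta-\beta+2\delta\beta^2<(1-2\delta)(1-\beta)$. So both the missing case and the decisive estimate in the main case need to be supplied.
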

\begin{proof}
If $a_k=R$ and $t_k\geq 4$, then $d_k^-\geq 4\bar\alpha_k+\bar\alpha_k d_{k-1}^-\geq3\bar\alpha_k$ and by $(\ref{1s3s4})$
\begin{align*}
  M(\alpha,\gamma)  \leq\frac{1-3\bar\alpha_k}{4(1-\bar\alpha_k\beta)}\leq\frac{1-3\delta}{4(1-\delta\beta)}=\frac{1-2\delta-\beta+\delta\beta}{4(1-\delta\beta)}< C(R),
\end{align*}
and hence we can assume that $|t_i|\leq2$ if $a_i=R$.
Suppose $a_k=R+1$ and $t_{k}\geq 3$. Then, $d_k^-\geq 3\bar\alpha_k-2\bar\alpha_k\bar\alpha_{k-1}$, $d_k^+\geq -2\alpha_k$, and as $l\rightarrow \infty$
\begin{align*}
    s_3(k) & \leq\frac{(1-4\bar\alpha_k+2\bar\alpha_k\bar\alpha_{k-1})(1+\alpha_k)}{4(1-\bar\alpha_k\alpha_k)}\rightarrow\frac{(1-4\delta+2\delta\beta)(1+\beta)}{4(1-\delta\beta)}\\
    &=\frac{(1-4\delta+\beta-2\delta\beta+2\delta\beta^2)}{4(1-\delta\beta)}
    =\frac{(1-2\delta-\beta+2\delta\beta^2)}{4(1-\delta\beta)}< C(R).\qedhere
\end{align*}
\end{proof}
We now also assume that $|t_i|\leq2$ for all sufficiently large $i$.
\begin{lemma}\label{2} If $\gamma$ has infinitely many of the following blocks, then $ M(\alpha,\gamma)< C(R)$.
\begin{enumerate}[label=(\roman*)]
\item $t_k=1$ and $t_{k+1}=-2$.
        \item $t_k=0$ and $t_{k+1}=-2$.
\end{enumerate}
\end{lemma}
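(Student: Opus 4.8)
The plan is to handle the two blocks in Lemma \ref{2} in exactly the same style as Lemmas \ref{consecutive} and \ref{>2}: locate a single index $k$ (or $k+1$) at which the accumulated $d$-values are forced to be large enough that one of $s_3$, $s_4$ (or $s_1$, $s_2$) drops below $C(R)$ in the limit $l\to\infty$, then invoke \eqref{1s3s4} or the bound $\min\{s_3(k),s_4(k)\}\le(1-d_k^-)/(4(1-\bar\alpha_k\alpha_k))$. By the reversal/sign symmetries already recorded, it suffices to treat one representative of each block; the negatives and reversals follow automatically.

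For block (i), $t_k=1$, $t_{k+1}=-2$: the key is to estimate $d_k^+$ from below using $t_{k+1}=-2$ — we have $d_k^+ \ge -2\alpha_k + \alpha_k d_{k+1}^+$, but we want a lower bound, so I would instead look at the \emph{right} end. Better: use that $t_{k+1}=-2$ forces $d_{k+1}^-$ to be negative and sizable. Concretely $d_{k+1}^- = t_{k+1}\bar\alpha_{k+1} + \bar\alpha_{k+1} d_k^- \le -2\bar\alpha_{k+1} + \bar\alpha_{k+1}(\text{something})$. Since by Lemma \ref{consecutive} we may assume no ``two positives separated by zeros with an even endpoint'' block survives, and since $t_k=1>0$, the index $a_{k+1}$ is constrained; if $a_{k+1}=R$ (even) this is already covered, so $a_{k+1}=R+1$, whence $\bar\alpha_{k+1}\to\delta$ and $\alpha_{k+1}\to\beta$ as $l\to\infty$. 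With $d_k^-\le \bar\alpha_k \le \sigma < 1$ we get $d_{k+1}^-\le -2\bar\alpha_{k+1}+\bar\alpha_{k+1}\sigma < -\bar\alpha_{k+1}$, and then $s_3(k+1)$ or $s_4(k+1)$ via \eqref{1s3s4} gives $\le (1-d_{k+1}^-)/(4(1-\bar\alpha_{k+1}\alpha_{k+1}))$ — wait, that has $1-d_{k+1}^-$ which is now $>1$, the wrong direction. So for a \emph{negative} $d_{k+1}^-$ one should instead bound via $\min\{s_1(k+1),s_2(k+1)\}\le (1+d_{k+1}^-)/(4(1-\bar\alpha_{k+1}\alpha_{k+1}))$ (the analogue of \eqref{1s3s4} obtained from $\sqrt{s_1 s_2}$), giving $\le (1-\bar\alpha_{k+1})/(4(1-\bar\alpha_{k+1}\alpha_{k+1})) \to (1-\delta)/(4(1-\delta\beta))$, and then checking $(1-\delta)/(4(1-\delta\beta)) < C(R) = (1-2\delta)(1-\beta)/(4(1-\delta\beta))$ reduces, after clearing the common denominator, to the polynomial inequality $1-\delta < (1-2\delta)(1-\beta) = 1-3\delta+4\delta\beta-\dots$, i.e.\ to an inequality in $\beta$ that holds for $R\ge 4$. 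Actually one likely needs the sharper $d_{k+1}^-\le -2\bar\alpha_{k+1}+\bar\alpha_{k+1}\bar\alpha_k\le -2\bar\alpha_{k+1}+\bar\alpha_{k+1}\sigma^2$ or to also use $t_k=1$ more carefully (it makes $d_k^-$ positive, which helps), tightening to $(1+d_{k+1}^-)\le 1-2\bar\alpha_{k+1}+\bar\alpha_{k+1}\sigma \le (1-\delta)$-ish but with an extra $-\delta\sigma$ term; the precise massaging is the routine part.

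Block (ii), $t_k=0$, $t_{k+1}=-2$, is the cleaner of the two: now $d_{k+1}^- = -2\bar\alpha_{k+1} + \bar\alpha_{k+1} d_k^-$, and because $t_k=0$ we may as well track that $a_{k+1}$ cannot be $R$ (even) — if it were, Lemma \ref{consecutive} with a neighbouring positive $t$ or a direct estimate kills it — so $a_{k+1}=R+1$, $\bar\alpha_{k+1}\to\delta$, $\alpha_{k+1}\to\beta$. Then $d_{k+1}^- \le -2\bar\alpha_{k+1} + \bar\alpha_{k+1}(1+\bar\alpha_k) \le -\bar\alpha_{k+1}+\bar\alpha_{k+1}\bar\alpha_k$, and using the $\sqrt{s_1 s_2}$ bound $\min\{s_1(k+1),s_2(k+1)\}\le (1+d_{k+1}^-)/(4(1-\bar\alpha_{k+1}\alpha_{k+1})) \le (1-\bar\alpha_{k+1}(1-\bar\alpha_k))/(4(1-\bar\alpha_{k+1}\alpha_{k+1}))$, which tends to $(1-\delta(1-\sigma))/(4(1-\delta\beta))$ at worst, or better to $(1-\delta(1-\delta\beta))$; comparing with $C(R)$ again reduces to a one-variable inequality in $\beta$ valid for $R\ge 4$ (or $R\ge 4$ even, as the hypothesis of Theorem \ref{myexample} requires). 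I would present both parts in a single proof, handling (ii) first as the template and (i) second with the one extra observation that $t_k=1>0$ constrains $a_{k+1}$ via Lemma \ref{consecutive}.

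The main obstacle I anticipate is \emph{bookkeeping the sign of $d$ at the chosen index and picking the correct pair} $\{s_1,s_2\}$ versus $\{s_3,s_4\}$: a $t=-2$ makes the relevant $d^-$ (or $d^+$) negative, so one wants the $1+d$ bound, not the $1-d$ bound of \eqref{1s3s4}, and must first record the companion inequality $\min\{s_1(k),s_2(k)\}\le (1+d_k^-)/(4(1-\bar\alpha_k\alpha_k))$ (immediate from $\sqrt{s_1 s_2} = \sqrt{((1+d_k^-)^2-\bar\alpha_k^2)(1-(\alpha_k-d_k^+)^2)}/(4(1-\bar\alpha_k\alpha_k))$). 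The second, smaller obstacle is verifying that in each block the endpoint $a_{k+1}$ really is $R+1$ rather than $R$ — i.e.\ that the $R$-endpoint subcase is genuinely subsumed by Lemma \ref{consecutive} or by a direct one-line estimate — which needs a short case check but no new idea. Everything after that is the same polynomial comparison against $C(R)=(1-2\delta)(1-\beta)/(4(1-\delta\beta))$ using $\delta=\beta+\delta\beta$ (the $R$ odd case) or $\beta=\delta+\delta\beta$ (the $R$ even case of this lemma), exactly as in the proofs of Lemmas \ref{consecutive}, \ref{>2}.
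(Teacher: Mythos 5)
Your overall template (pin down one index where the accumulated $d$-values are forced to be large, then compare a single $s_j$ with $C(R)$) is the right one, but the specific route you take --- shifting to index $k+1$ and invoking the companion bound $\min\{s_1(k+1),s_2(k+1)\}\le (1+d_{k+1}^-)/(4(1-\bar\alpha_{k+1}\alpha_{k+1}))$ --- is quantitatively too weak, and the final comparison you assert is false. With $t_{k+1}=-2$ the best that bound can give is a numerator $1+d_{k+1}^-\le 1-2\bar\alpha_{k+1}+O(R^{-2})\approx 1-2/R$, whereas $4(1-\delta\beta)\,C(R)=(1-2\delta)(1-\beta)\approx 1-3/R$. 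In particular your explicit claim that $(1-\delta)/(4(1-\delta\beta))<C(R)$ amounts to $1-\delta<(1-2\delta)(1-\beta)$, which fails for every $R$, since $(1-2\delta)(1-\beta)<1-2\delta<1-\delta$. The $\sqrt{s_1s_2}$ route discards exactly the information that makes the lemma work: the $-2$ has to be seen \emph{forward} from index $k$, where it contributes $-2\alpha_k$ to $d_k^+$ and stacks on the $-\alpha_k$ already present in the factor $(1-\alpha_k+d_k^+)$ of $s_1(k)$. That is the paper's argument: for (i), $d_k^-\le\bar\alpha_k$ makes the first factor of $s_1(k)$ at most $1$, while $d_k^+\le -2\alpha_k+2\alpha_k\alpha_{k+1}$ makes the second at most $1-3\alpha_k+2\alpha_k\alpha_{k+1}\to 1-3\beta+2\beta^2$, and
$$(1-2\delta)(1-\beta)-(1-3\beta+2\beta^2)=2(\beta-\delta)(1-\beta)=2\delta\beta(1-\beta)>0$$
using $\beta-\delta=\delta\beta$ for $R$ even. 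Case (ii) is handled the same way, with the first factor controlled via $d_k^-\le 2\bar\alpha_k\bar\alpha_{k-1}$ and a lower bound $\alpha_k\ge 1/(R-\theta)$, $\theta=[0;\overline{R+1}]^-$, feeding the $-3\alpha_k$ term.

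Two further slips. First, $t_i$ and $a_i$ have the same parity, so $t_{k+1}=-2$ forces $a_{k+1}=R$ (even), not $R+1$ as you assert; hence $\bar\alpha_{k+1}\to\delta$ is wrong, and the subcase you propose to discard is the only one that occurs. Second, Lemma \ref{consecutive} concerns two \emph{positive} $t$'s separated by zeros, so it places no constraint on $a_{k+1}$ here, where $t_{k+1}=-2<0$. Your companion inequality $\min\{s_1,s_2\}\le(1+d_k^-)/(4(1-\bar\alpha_k\alpha_k))$ is itself correct and harmless to record, but it cannot close either case of this lemma.
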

\begin{proof}$(i)$:
Plainly $d_k^-\leq\bar\alpha_k$, $d_k^+\leq-2\alpha_k+2\alpha_k\alpha_{k+1}$, and as $l\rightarrow \infty$
\begin{align*}
    s_1(k)\leq\frac{(1-3\alpha_k+2\alpha_k\alpha_{k+1})}{4(1-\bar\alpha_k\alpha_k)}\rightarrow \frac{(1-3\beta+2\beta^2)}{4(1-\delta\beta)}=\frac{(1-2\delta-\beta+2\delta\beta^2)}{4(1-\delta\beta)}< C(R).
\end{align*}
\noindent
$(ii)$: With $\theta=[0;\overline{R+1}]^-$ we have $\alpha_k\geq\cfrac{1}{R-\theta}=:\lambda$. Since $d_k^-\leq2\bar\alpha_k\bar\alpha_{k-1}\leq 2\beta \bar\alpha_k$, $d_k^+\leq-2\alpha_k+2\alpha_k\alpha_{k+1}$
and $\alpha_k\leq \beta$ 
\begin{align*}
    s_1(k)&\leq\frac{(1-(1-2\beta)\bar\alpha_k)(1-3\alpha_k+2\beta^2)}{4(1-\bar\alpha_k\beta)}\\ & \leq\frac{(1-\delta+2\beta \delta)(1-3\lambda+2\beta^2)}{4(1-\delta\beta)}
  \leq\frac{(1-3\lambda+2\beta^2)}{4(1-\delta\beta)} < C(R),
\end{align*}
using Lemma \ref{calculus} and $\bar\alpha_k>\delta$ for the second inequality. For the last inequality observe
that  $\beta-\lambda=\beta\lambda(\beta-\theta)<\lambda \beta^2$ so that
$1-3\lambda+2\beta^2< 1-3\beta +\beta^2(2+3\lambda)$ while  $(1-2\delta)(1-\beta)=1-3\beta+\beta^2(4-4\delta)$. 

\end{proof}
From Lemmas \ref{>2} and \ref{2} we see that a $\gamma$ with infinitely many $|t_i|\geq 2$ has $M(\alpha,\gamma)\leq C_0(R)<C(R)$ as $l\rightarrow \infty$ (where $C_0(R)$ is made explicit in the proof).
Hence $\lim_{l\rightarrow \infty}\rho(\alpha)=\lim_{l\rightarrow \infty} M(\alpha,\gamma^*)=C(R). $ \qed

\section{Proof of Theorem \ref{asymptotic}}
Suppose that $\alpha=[0;\overline{R,R,R+1,R,R+1,R+1,R,R+1,R+1,R,R+1}]^-$ with $R$ odd. By Theorem \ref{mymaintheorem} and \eqref{AsympOdd} we have
 \begin{align*}
     \rho(\alpha)\geq M(\alpha,\gamma^*)\geq C(R)=\frac{1}{4}\left(1-\frac{3}{R}+\frac{4}{R^2}+O(R^{-3})\right),
 \end{align*}
so we just need to show that all $\gamma $ have
\be \label{enough}    M(\alpha,\gamma)\leq \frac{1}{4}\left(1-\frac{3}{R}+\frac{4}{R^2}+O(R^{-3})\right). \ee

 We observe the following  
\begin{align*}
  [0;R,R \text{ or } R+1,\cdots]^-&=\frac{1}{R}+O(R^{-3}), \\
[0;R+1,R \text{ or } R+1,\cdots]^-&=\frac{1}{R}-\frac{1}{R^2}+O(R^{-3}),
\end{align*}
and so for $\alpha$ we have
\be \label{denom}
    \frac{1}{1-\bar\alpha_k\alpha_k}=1+\bar\alpha_k\alpha_k+(\bar\alpha_k\alpha_k)^2+\cdots=1+\frac{1}{R^2}+O(R^{-3}).
\ee

Now if $\gamma$ has $t_k=a_k$ infinitely often, then from $(\ref{s3s4})$ 
 \begin{align*}
     M(\alpha,\gamma)\leq\frac{1}{4}\left(\frac{1}{R}+O(R^{-3})\right)\left(1+\frac{1}{R^2}+O(R^{-3})\right)=\frac{1}{4}\left(\frac{1}{R}+O(R^{-3})\right),
 \end{align*}
so we can assume that $\gamma$ has only finitely many $t_i=a_i$. In view of \eqref{denom} we write
$$ \tilde{s}_j(k)=4(  1-\bar\alpha_k\alpha_k)s_j(k),\quad j=1,\ldots,4, $$
and \eqref{enough} amounts to showing that there are infinitely many $k$ with an
\be \label{2enough} \tilde{s}_j(k) \leq 1-\frac{3}{R}+\frac{3}{R^2} +O(R^{-3}).\ee
We proceed as in the proof of Theorem \ref{myexample} successively eliminating blocks of $t_i$, recalling that when we eliminate a block we also eliminate its negative or reverse (by interchanging $s_j(k)$).

By \eqref{1s3s4} we will get \eqref{2enough} if $\gamma$ has infinitely many $k$ with
\be \label{smallt} d_k^-\geq \frac{3}{R}-\frac{3}{R^2} +O(R^{-3}). \ee
We use this to rule out large $|t_i|$. If $t_k\geq 5$ then we have 
$$d_k^-\geq 5\bar\alpha_k+ \bar\alpha_k d_{k-1}^-> 4\bar\alpha_k\geq \frac{4}{R}+O(R^{-2}).$$
 If we have $t_k=4$ with $|t_{k-1}|\leq 4$ then $d_{k-1}^-=O(R^{-1})$ and 
again 
$$d_k^-=  \frac{4}{R}+O(R^{-2}).$$ 
So we can assume that $|t_i|\leq 3$ for all but finitely many $i$. If we have infinitely many blocks $a_k,a_{k-1}=R,R+1$ (or their reverse) with $t_k=3$ then 
$$d_k^-\geq 3\bar\alpha_k-2\bar\alpha_k\bar\alpha_{k-1}+O(R^{-3}) = \frac{3}{R}-\frac{2}{R^2}+O(R^{-3}).$$
Hence we can assume that (all but finitely many) $t_i=\pm 1$ if $a_i=R$ and $t_i=0,\pm 2$ if $a_i=R+1$.

First we rule out infinitely many consecutive positive or consecutive negative $t_i$. If $t_k,t_{k+1}>0$ then $d_k^-\geq \bar\alpha_k +O(R^{-2}),d_k^+\geq \alpha_k+O(R^{-2})$ and
$$ \tilde{s}_3(k)\leq (1-2\bar\alpha_k +O(R^{-2}))((1-2\alpha_k +O(R^{-2}))=1-\frac{4}{R}+O(R^{-2}).  $$

Next we rule out infinitely many blocks $t_{k-1},t_{k},t_{k+1},t_{k+2}=0,1,0,0$ (or their reverse $0,0,1,0$ or their negatives) since $d_{k}^-=\bar\alpha_k +O(R^{-3})$, $d_k^+=O(R^{-3})$ and
\begin{align*} \tilde{s}_3(k) &= \Big(1-2\bar\alpha_k+O(R^{-3})\Big)\Big(1-\alpha_k+O(R^{-3})\Big)\\
 & =\left(1-\frac{2}{R}+O(R^{-3})\right)\left(1-\frac{1}{R}+\frac{1}{R^2}+O(R^{-3})\right)= 1-\frac{3}{R}+\frac{3}{R^2}+O(R^{-3}). 
\end{align*}

If $t_k,t_{k+1}=2,0,$  then $d_k^-= 2\bar\alpha_k +O(R^{-2})$, $d_k^+=O(R^{-2})$ and
\begin{align*} \tilde{s}_3(k) & =\Big(1-3\bar\alpha_k+O(R^{-2})\Big)\Big(1-\alpha_k+O(R^{-2})\Big)\\
 & =\left(1-\frac{3}{R}+O(R^{-2})\right)\left(1-\frac{1}{R}+O(R^{-2})\right)= 1-\frac{4}{R}+O(R^{-2}). 
\end{align*}
Hence blocks $R+1,R+1$ must eventually have $t_k,t_{k+1}=0,0$ or $2,-2$ or $-2,2$.

If $t_{k-1},t_k,t_{k+1}=-2,1,-2,$  then $d_k^-= \bar\alpha_k -2\bar\alpha_k\bar\alpha_{k-1}+O(R^{-3})$, $d_k^+\leq -2\alpha_k+2\alpha_k\alpha_{k+1}+O(R^{-3})$ and
\begin{align*} \tilde{s}_1(k) & \leq \Big(1-2\bar\alpha_k\bar\alpha_{k-1}+O(R^{-3})\Big)\Big(1-3\alpha_k+2\alpha_k\alpha_{k+1}+O(R^{-3})\Big)\\
 & =\left(1-\frac{2}{R^2}+O(R^{-3})\right)\left(1-\frac{3}{R}+\frac{5}{R^2}+O(R^{-2})\right)= 1-\frac{3}{R}+\frac{3}{R^2}+O(R^{-3}). 
\end{align*}
Hence if we have a block $a_{k-2},a_{k-1},a_{k},a_{k+1},a_{k+2}=R+1,R+1,R,R+1,R+1$ with $t_{k}=\pm 1$ then we must have $t_{k+1},t_{k+2}=0,0$ and $t_{k-1},t_{k-2}=\mp 2,\pm 2$ (or vice versa in which case we use the reverse). Consider then the block
$$a_{k+1},\ldots ,a_{k+6}=R+1,R+1,R,R+1,R,R,\quad t_{k+1},t_{k+2}=0,0.$$
Assuming that  $t_{k+3}=1$ (or use the negative), then having ruled out $0,0,1,0$, we must have $t_{k+4},t_{k+5},t_{k+6}=-2,1,-1$ and finally
\begin{align*} \tilde{s}_{1}(k+4) & =\Big( 1-3\bar\alpha_{k+4}+\bar\alpha_{k+4}\bar\alpha_{k+3}+ O(R^{-3}\Big)\Big( 1-\alpha_{k+4}\alpha_{k+5}+O(R^{-3})\Big) \\
=& \left(1-\frac{3}{R}+\frac{4}{R^2}+O(R^{-3})\right)\left( 1-
\frac{1}{R^2}+O(R^{-3}\right) =  1-\frac{3}{R}+\frac{3}{R^2}+O(R^{-3}).  \qed
\end{align*}
 
\section{Proof of \eqref{otheralpha} }

 If in the previous proof we had taken $\alpha$ to have period $R,(R+1,)^l$, $R\geq 5$ odd,   then \eqref{enough} would still hold, except for those $\gamma$ whose $t_i$ eventually consist of zeros one side of the $\pm 1$ and blocks of $\mp 2,\pm 2$ the other. For these $\gamma$, 
if $t_k=1$ inside a block $\ldots,0,0,1,-2,2,\ldots,$ then $d_{k}^-\rightarrow \delta$, $d_{k}^+\rightarrow -2\beta/(1+\beta)$ and $d_{k-1}^-\rightarrow 0$, $d_{k-1}^+\rightarrow \delta-2\delta \beta/(1+\beta)$ as $l\rightarrow \infty$ and
$$ s_3(k-1), s_1(k) \rightarrow \frac{ 1-3\beta +\frac{2\beta^2}{1+\beta}}{4(1-\delta\beta)}=\frac{\left(1-2\delta +\frac{2\delta \beta}{1+\beta}\right)(1-\beta)}{4(1-\delta\beta)}=:C_1(R), $$
with  $s_1(k-1)>s_3(k-1)$ and $s_3(k)> (1-2\delta)/4(1-\alpha\beta) >C_1(R)$.  Likewise for the negatives and reverses.
At all places 
$$s_2(k),s_4(k)>(1-\bar\alpha_k)(1-\alpha_k)/4(1-\beta\alpha_k)>(1-\delta)^2/4(1-\delta\beta)>C_1(R). $$
If $t_k,t_{k+1}=0,0$ and $d_k^-\rightarrow 0$, $\bar\alpha_k\rightarrow \beta$ (likewise if $d_k^+\rightarrow 0$, $\alpha_k\rightarrow \beta$), then 
$$s_1(k),s_3(k)\geq \frac{(1-\beta) (1-\alpha_k(1+\delta))}{4(1-\beta \alpha_k)}>\frac{(1-\beta)(1-\delta-\delta^2)}{4(1-\delta\beta)}>C_1(R).$$
If $t_k,t_{k+1}=-2,2$, then $d_{k-1}>\beta$ (either $t_{k-1}=2$ or $t_{k-1}=1$ with $d_{k-1}\rightarrow \delta$) 
and 
$$ s_1(k)\geq \frac{(1-\bar\alpha_k (3-\beta) )\left(1+\beta (1-2\delta)\right)}{4(1-\bar\alpha_k \beta)}\geq \frac{(1-3\delta+\delta \beta)( 1+\beta -2 \beta \delta)}{4(1-\delta\beta)}>C_1(R),$$
for $R\geq 9$ using
$$ (1-3\delta+\delta\beta)(1+\beta-2\beta\delta)=(1-3\beta+2\beta^2) + \beta( 1-9\beta +4\delta^2\beta),$$
replacing  $\bar\alpha_k$ by $1/(R+1-\delta)$ instead of $\delta$ in the second inequality and checking numerically for  $R=5$ and 7.
Likewise for $s_3(k)$ and for $2,-2$. Hence these $\gamma$ have $M(\alpha,\gamma)\rightarrow C_1(R)$ as $l\rightarrow \infty$.
The proof of Theorem \ref{asymptotic} immediately gives \eqref{otheralpha} for suitably large $R$. To see that it is true for all $R\geq 5$ we show that $M(\alpha,\gamma)< C_1(R)$ for the other $\gamma$.  Notice, if we let  $\l\rightarrow \infty,$ then$(1-\bar\alpha_k\alpha_k) ^{-1}\leq (1-\delta\beta)^{-1}$; so it will be enough to show that the remaining $\gamma$ have infinitely many
$$ \tilde{s}_j(k)\leq 1-3\beta +\frac{2\beta^2}{1+\beta} =\left(1-2\delta+\frac{2\delta\beta}{1+\beta}\right)(1-\beta)=:\tilde{C}_1(R). $$
We repeat the steps of the proof of Theorem \ref{asymptotic}; successive ruling out certain  blocks of $t_i$ (or their negatives and reverses) occurring infinitely often. We can rule out $t_k=a_k$ since
$ \delta < 1-3\beta <\tilde{C}_1(R).$ To eliminate large $|t_k|$ we replace \eqref{smallt} by
$$ d_k^- \geq 3\beta -\frac{2\beta^2}{1+\beta}, $$
successively ruling out infinitely many $t_k\geq 4$ using $d_k^->3\beta,$  then  $t_k=3$  using
$$d_k^-   \geq  3\delta - \frac{ 2\delta\beta}{1-\beta}>3\beta. $$ 

Now if $t_k=2$ and $t_{k+j}>0$ for some $j\geq 1$, with  $t_i=0$ for any $k<i<k+j$, then $d_k^-=2\bar\alpha_k+\bar\alpha_k d_{k-1}^-\geq \bar\alpha_k+\bar\alpha_k\bar\alpha_{k-1}, d_k^+\geq 0,$  and 
$$\tilde{s}_3(k)\leq(1-2\beta-\beta^2)(1-\beta)=(1-2\delta+2\delta\beta-\beta^2)(1-\beta)<\tilde{C}_1(R).$$ 
If $t_k,t_{k+1}=2,0,$ then
$$ \tilde{s}_3(k) \leq \left(  1-3\beta +2\beta \delta \right) \left( 1-\beta +2\beta\delta\right) =1-3\beta+2\beta^2-2\beta^3-\lambda < \tilde{C}_1(R), $$
with $\lambda=\beta(1-5\beta +2\beta^2(1-2\delta^2))>0.$

If $t_{k-1},t_k, t_{k+1}=0,1,0,$  then as $l\rightarrow\infty$
$$  \tilde{s}_3(k) \rightarrow ( 1-2\delta )( 1-\beta ) < \tilde{C}_1(R).  $$
Finally, if $t_{k-1},t_k,t_{k+1}=-2,1,-2,$ then
$$ \tilde{s}_1(k) \leq \left(  1- \frac{2\delta\beta}{1+\beta}   \right) \left(1-3\beta +\frac{2\beta^2}{1+\beta}\right)< \tilde{C}_1(R). \qed$$

\vspace{0cm} 


\begin{thebibliography}{99}

\bibitem{Cassels}
 J. W. S. Cassels, \text{An Introduction to Diophantine Approximation}, Cambridge Univ. Press, London/New York, 1957.
 
 
 \bibitem{Davenport}
H. Davenport, \textit{Indefinite binary quadratic forms, and Euclid's algorithm in real quadratic fields}, Proc. London Math. Soc. (2) \textbf{53} (1951), 65-82.

\bibitem{Ennola}
V. Ennola, \textit{On the first inhomogeneous minimum of indefinite binary quadratic forms, and Euclid's algorithm in real quadratic fields}, Ann. Univ. Turkuensis Ser AI \textbf{28} (1958), 9-58.

\bibitem{Fuk} 
S. Fukasawa, \textit{\"{U}ber die Grössenordnung des absoluten Betrages von einer linearen inhomogenen Form}, I, II \& IV, Japan J. Math. \textbf{3} (1926), 1-26, 91-106  \& \textbf{4} (1927) 147-167.


\bibitem{grace}
J.\ H.\ Grace, \textit{Note on a Diophantine approximation}, Proc.\ London Math.\ Soc.\ \textbf{17} (1918), 316-319.

\bibitem{Takao}
T.\ Komatsu, 
\textit{On inhomogeneous continued fraction expansions and inhomogeneous Diophantine approximation},
J. Number Theory \textbf{62} (1997), no. 1, 192–212. 




\bibitem{Pinner}
C. G. Pinner, \textit{More on inhomogeneous Diophantine approximation}, Journal de Th\'eorie des Nombres, Bordeaux. \textbf{13} (2001) no. 2, 539-557.

\bibitem{Arxiv}
C. G. Pinner, \textit{Lower bounds on the two-sided inhomogeneous approximation constant}, arXiv:1603.06178[math.NT], (2016).


\bibitem{Pinner2}
C. Pinner, \textit{On the inhomogeneous spectrum of period two quadratics,} arXiv:1603.06179 [math.NT], (2016).


\bibitem{Pitman}
J. Pitman, \textit{Davenport's constant for indefinite binary quadratic forms},  Acta Arith. \textbf{6} (1960), 37-46.


\bibitem{Rockett}
 A. Rockett and P. Sz\"{u}sz, \text{Continued Fractions}, World Scientific, Singapore, 1992.

\end{thebibliography}
\end{document}